\newtheorem{prop}{Proposition}[section]
\newtheorem{theo}[prop]{Theorem}
\newtheorem{cor}[prop]{Corollary}
\newtheorem{ex}[prop]{Example}
\newtheorem{exs}[prop]{Examples}
\newtheorem{defn}[prop]{Definition}
\newtheorem{rems}[prop]{Remarks}
\newcommand{\C}{\mbox{\boldmath $\mathbb{C}$}}
\newcommand{\R}{\mbox{\boldmath $\mathbb{R}$}}
\newcommand{\K}{\mbox{\boldmath $\mathbb{K}$}}
\newcommand{\N}{\mbox{\boldmath $\mathbb{N}$}}
\newcommand{\BP}{\mbox{\boldmath $\mathbb{P}$}}
\newcommand{\ord}{\textrm{ord}}
\newcommand{\mult}{\textrm{mult}}
\newenvironment{proof}
{\begin{trivlist} \item[\hskip \labelsep {\bf Proof}\hspace*{3 mm}]}
	{\hfill$\Box$\end{trivlist}}
\newenvironment{acknow}
{\begin{trivlist} \item[\hskip \labelsep {\bf Acknowledgments.}]}
	{\end{trivlist}}
\begin{document}

\title{On vertices and inflections of singular plane  curves}
\author{J. W. Bruce, M. A. C. Fernandes and F. Tari}

\maketitle

\begin{abstract}
Given the germ of a smooth plane curve $(\{f(x,y)=0\},0)\subset (\K^2,0), \K=\R, \C$, with an isolated singularity, we define two invariants $I_f$ and $V_f\in \N\cup\{\infty\}$,  which count the number of inflections and vertices (suitably interpreted in the complex case) concentrated at the singular point. The first is an affine invariant, while the second is invariant under similarities of $\R^2$, and their analogue for $\C^2$. When the curve has no smooth components, these invariants are always finite and bounded. We illustrate our results by computing the range of possible values for these invariants for Arnold's ${\cal K}$-simple singularities. We also establish a relationship between these invariants, the Milnor number of $f$ and the contact of the curve germ with its \lq osculating circle\rq. 
\end{abstract}

\renewcommand{\thefootnote}{\fnsymbol{footnote}}
\footnote[0]{2020 Mathematics Subject classification:
	53A04, 
	53A55, 
	58K05. 
}
\footnote[0]{Key Words and Phrases. Plane curves, inflections, invariants, singularities, vertices.}

\section{Introduction}\label{sec:intro}
Let $C$ be germ of a plane curve in $\R^2$. The dual and evolute of $C$ are associated curves of significant interest. The dual has singularities corresponding to inflections and bi-tangencies, while the evolute has singularities related to vertices and bi-osculating circles.  Evolutes can also be interpreted as the caustics of the wavefront generated by $C$. Moreover, when $C$ is a projective plane curve, the dual of the dual is $C$.

When the curve is singular, we can still define both the dual and the evolute, and it is natural to ask what happens at the singular points. In particular, if we replace the singular curve with a nearby generic one, how many inflections and vertices emerge from the singularity? The answer naturally depends on how we define a generic deformation. If the curve is given parametrically, we deform the parametrisation to obtain an immersed curve with normal crossings. If it is given implicitly by an equation, we deform the equation to obtain a smooth curve. The question, in each case, is how many inflections and vertices emerge under these respective deformations. 
These questions have been addressed in \cite{DiasTari} for parametrised curves, and in \cite{WallDuality} for inflections, both in the case of parametrised curves and curves defined by equations. In this paper we give a self-contained account of all cases.

While primarily motivated by the real case, it is most efficient to start with germs of (possibly singular) holomorphic curves, defined either by a parametrisation $\gamma:(\C,0)\to (\C^2,0)$, or as a zero set of a germ $f:(\C^2,0)\to (\C,0)$, which we denote respectively by $(C_\gamma,0)$ and $(C_f,0)$. 

Inflections are points where some line has order of contact at least $3$; vertices where the order of contact with some circle or line is at least $4$. We define invariants $I_\gamma, I_f$ and $V_\gamma, V_f$ in $\N\cup\{\infty\}$, which count the number of inflections and vertices concentrated at the singular point. 

We show how to compute these numbers in terms of the corresponding invariants for the irreducible branches of $f$. Given a parametrisation $\gamma$ of an irreducible germ defined by $f=0$, we establish relations between $I_\gamma, I_f, \mu(f)$, $V_\gamma, V_f, \mu(f)$, where $\mu(f)$ is the Milnor number  of $f$. We also relate $I_{\gamma}, V_{\gamma}$ and the contact between $\gamma$ and its osculating circle. 

When $C_f$ has no smooth components, we show that $I_f$ and $V_f$ are bounded, and we provide detailed information for the range of values they can attain. As an illustration, we determine the possible values of $I_f$ and $V_f$ when $f$ has one of Arnold's simple singularities.

This paper is organized as follows:
\begin{itemize}
\item[--] In \S \ref{sec:prel}, we define the notion of vertices of regular complex analytic curves, discuss multiplicity and intersection numbers, and establish some results on the contact between germs of singular curves and smooth curves, which we use throughout the paper.

\item[--]  In \S \ref{sec:vert}, we define $I_\gamma$, $I_f$, $V_\gamma$ and $V_f$, and describe some of their properties. In particular, we relate $I_\gamma$ and $V_\gamma$ to the order of contact of the curve with its osculating line and osculating circle, respectively.

\item[--]  In \S\ref {sec:irreducible}, we relate $I_\gamma$ to $I_f$, and $V_\gamma$ to $V_f$, when the curve is irreducible, that is, when $(C_\gamma,0)=(C_f,0)$. We also determine the range of values that $I_f$ and $V_f$ can attain.

\item[--]  In \S\ref{sec:exam},  we carry out explicit calculations, including the ranges of values of  $I_f$ and $V_f$ for curves with simple singularities.

\item[--]  Finally, we return to the real case in \S \ref{sec:vertR}, deducing our results from the previous sections  using finite determinacy results. 
\end{itemize}

Throughout, all our defining functions  $f:(\C^2,0)\to (\C,0)$ are ${\cal K}$-finite, that is $f$ has no repeated factors.


\section{Preliminaries} \label{sec:prel}

We begin with some basic notions from singularity theory (see, for example, \cite{wall}). Two germs $f_i : (\K^n,0) \to (\K^p,0), i=1,2,$ and $\K=\C$ or $\R$, are said to be $\mathcal{R}$-\textit{equivalent} if there exists a germ of a diffeomorphism $ \phi:(\K^n,0) \to (\K^n,0)$ such that $f_2 = f_1 \circ \phi^{-1}$. 
 They are $\mathcal C$-\textit{equivalent} if there exists a smooth family of invertible $p\times p$ matrices $A : (\K^n,0) \to GL(p,\K)$ such that $f_2(x)=A(x)f_1(x)$.  
 They are $\mathcal K$-\textit{equivalent} if there exist $\phi$ and $A$ as above such that $f_2 (x)=A(x)(f_1 \circ \phi^{-1})(x)$.

Two germs $f_i : (\K^n,0) \to (\K^p,0)$, with $i=1,2$, are  said to be ${\mathcal A}$-\textit{equivalent} if there exist germs of diffeomorphisms $\phi:(\K^n,0) \to (\K^n,0)$ and $ \psi:(\K^p,0) \to (\K^p,0)$ such that 
$f_2=\psi\circ f_1 \circ \phi^{-1}$.

If ${\cal G}$ is one of the above groups ($\mathcal R,\mathcal K,\mathcal A$), a germ $f$ is said to be $k$-${\cal G}$-{\it determined} if any germ $g$ with $j^kg=j^kf$ is ${\cal  G}$-equivalent to $f$,  where $j^kf$ denotes the $k$-jet of $f$ at the origin (i.e., the Taylor polynomial of degree $k$, without the constant term).   A germ $f$ is said to be ${\cal G}$-{\it finite} if it is $k$-${\cal  G}$-determined for some $k$. There are algebraic criteria for determining whether a germ is ${\cal G}$-finite; see, for example,  \cite{wall}. 

\smallskip
In what follows, we consider germs of plane curves $(C,0)\subset (\K^2,0)$, where the curves may be defined either by parametrisations or by equations.

A germ of a parametrised curve $\gamma:(\C,0)\to (\C^2,0)$ is ${\cal A}$-finite if and only if $\gamma$ is an injective immersion away from the origin. Any such map can be written in the form $(t^m,y(t))$, where $\ord(y)>m$. Among the powers appearing in the series expansion of $y(t)$, the lowest exponent not divisible by $m$ is called the \textit{first Puiseux exponent} of $\gamma$, denoted $\beta(\gamma)$. If $\gamma$ is singular and ${\cal A}$-finite, then the first Puiseux exponent is finite (indeed, since 
$m>1$, if all exponents in 
$y(t)$ were divisible by 
$m$, then 
$y(t)=y_1(t^m)$ for some 
$y_1$, and $\gamma$ would fail to be injective). Clearly, $\beta(\gamma)$ is an analytic invariant of the curve $\gamma$.

Let $\mathcal{O}_s$ denote the ring of germs of holomorphic functions $(\C^s,0)\to \C$.  
A germ $f\in \mathcal{O}_2$ is ${\cal K}$-finite if and only if $f$ has an isolated singularity at the origin, which is equivalent to $f$ having no repeated factors. 
Such a germ decomposes into a product of irreducible factors in $\mathcal{O}_2$, namely $f = f_1\cdots f_n$. Clearly, the associated curves satisfy $C_f = C_{f_1} \cup \cdots \cup C_{f_n}$, where each $C_{f_i}$ is referred to as a \textit{branch} or \textit{component} of $C_f$.  

If $\gamma_j, j=1,2,$ are ${\cal A}$-finite germs  with reduced defining equations $f_j=0$, then  $\gamma_1$ and $\gamma_2$ are ${\cal A}$-equivalent if and only if $f_1$ and $f_2$ are ${\cal K}$-equivalent; see \cite{BruceGaffney}.  

If we write $f= F_r+F_{r+1}+\cdots,$
where each $F_i$ is a homogeneous polynomial of degree $i$ in the variables $x$ and $y$, and $F_r \neq 0$, then $r$ is the \textit{multiplicity} of $f$, denoted $\mult(f)$, and $C_{ F_r}$ is the \textit{tangent cone} of $C_f$.

The order of $\rho=\sum a_kt^k \in  \mathcal{O}_1$,  denoted by $\ord(\rho)$, is the smallest positive integer $k > 0$ such that $a_k\ne 0$. Equivalently,
$$
\ord(\rho) = \dim_\mathbb{\C} \frac{ \mathcal{O}_1}{\mathcal{O}_1\langle \rho \rangle},
$$
where $\mathcal{O}_1\langle \rho \rangle$ denotes the ideal in $ \mathcal{O}_1$ generated by $\rho$. Clearly, for any $\rho,\sigma\in \mathcal O_1$, we have  $\ord(\rho \sigma) = \ord(\rho)+\ord(\sigma)$. Moreover, if $\rho(0)=0$, then $\ord(\rho')=\ord(\rho)-1$.

 
\subsection{Vertices and inflections of regular curves}
Recall that two regular curves in $\C^2$, one parametrised by $\gamma: U\to \mathbb \C^2$ and  the other given by the equation $g=0,$ with $g:V\to \C$, where $U$ and $V$ are open subset of $\C$ and $ \C^2$ respectively, have {\it $k$-point contact} ($k\ge 1$) at $t_0$ if the following conditions are satisfied: 
$\gamma(t_0)\in V$, $(g\circ \gamma)(t_0)=0$, $(g\circ\gamma )^{(i)}(t_0)=0$, for $i=1,\ldots,k-1,$ and $(g\circ\gamma )^{(k)}(t_0)\ne 0$. 
Switching the roles of the two curves gives the same integer for the contact. The curves are said to have {\it at least 
$k$-point contact} if we drop the last condition.

We are interested in the local contact between a plane curve and lines and circles. For this purpose, it is convenient to both projectivise and complexify. A complex circle in $\C^2$ is defined as a conic in  $\BP\C^2$ with an equation of the form 
$$
A(x^2+y^2)+2Bxz+2Cyz+Dz^2=0.
$$

Such a conic passes through the circular points at infinity, $(1:\pm i:0)$. Its centre and radius are defined analogously to the real case, with the degenerate case  $A=0$ corresponding to lines.

Given a point $p\in\BP\C^2$ not on the line joining the circular points at infinity, we may choose a projective change of coordinates that maps $p$ to $(0:0:1)$, and then work in the affine chart with $z=1$. In this chart, the definitions of  inflection and vertex, dual and evolute and results on local structures for smooth complex curves proceed in the usual way; see \cite{BruceGiblin}. 
In the real case ($\K=\R$), the group preserving the circles is the group of similarities. In the complex case, if we projectivise, it is the group preserving the circular points at infinity. 
We may think of the subgroup fixing those points as analogous to the orientation preserving transformations, while the coset that exchanges them corresponds to orientation reversing transformations. 
In the affine chart $z=1$, one can check that the group fixing both the circular points at infinity and the origin consists of the linear maps of the form $(x,y)\mapsto (ax+by,-bx+ay),$ with $a, b\in \C, a^2+b^2\ne 0$.

\subsection{Multiplicity and intersection number}\label{subs:MultInter}

Given $f, g \in \mathcal{O}_2$ the  \textit{intersection number} of $f$ and $g$, denoted by $m(f,g)$, is the codimension of the ideal $\mathcal{O}_2\langle f,g \rangle$ in $\mathcal{O}_2$. That is, 
 $$
 m(f,g) = \dim_{\C}  \frac{\mathcal{O}_2}{\mathcal{O}_2\langle f,g \rangle}.
 $$ 

Further details about the intersection multiplicity (or the intersection index) can be found in \cite{Hefez}. We will use the following properties:
\begin{center}
\begin{tabular}{ll}
	1. $m(f,g)=m(g,f)$;
	&
		4. $m(f,g+fh)=m(f,g)$;
		\\	
	2. $m(uf,vg)=m(f,g)$\, \mbox{\rm (for units $u,v \in \mathcal{O}_2$)};
	&
	5. $m(f \circ \Phi,g \circ \Phi) = m(f,g)$.
	\\
	3. $m(f,gh) = m(f,g)+m(f,h)$;
	&
\end{tabular}
\end{center}
Here, $f,g,h \in \mathcal{O}_2$, and $\Phi:(\C^2,0)\to (\C^2,0)$ is a bi-holomorphic map-germ. 

Note that if $f$ and $g$ are irreducible, then $m(f,g)$ is non-finite if and only if $f$ and $g$ define the same curve. The above properties follow directly from the definition.  However, what is not simple to prove is that $m(f,g)$ is the local number of inverses images of a regular value of $(f,g):(\C^2,0)\to (\C^2,0)$; see \cite{Arnold}, Chapter 5. While the following results are standard, we outline their proofs to keep the presentation relatively self-contained.

\begin{prop}\label{prop:multOrderParm}
{\rm (1)} Let  $\gamma:(\C,0)\to (\C^2,0)$ be a parametrisation of  a germ  of an irreducible curve $C_f$. Then $m(f,g)=\mbox{\rm \ord} (g\circ \gamma)$ {\rm (\cite{Hefez})}. If $f=0$ is reducible with $r$ branches parametrised by $\gamma_j$, then 
$m(f,g)=\sum_{j=1}^r \mbox{\rm \ord}(g\circ \gamma_j).$

{\rm (2)} Suppose that $C_f$ is irreducible with parametrisation $\gamma(t)=(t^m,y(t))$, where $\mbox{\rm \ord}(y(t))>m$. Then one can write the equation of the curve in the form $y^m+g(x,y)=0$, where $g\in{\cal O}_2^{m+1}$.
	
{\rm (3)} If $f$ and $g$ are irreducible and their tangent cones are transverse, then $m(f,g)=\mbox{\rm \mult} (f)\, \mbox{\rm \mult}(g)$.

{\rm (4)} If $f=0$ is irreducible, then $m(f,g)=\infty$ if and only if $f=0$ is a component of $g=0$.
\end{prop}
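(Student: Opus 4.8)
The plan is to prove the four parts using Puiseux's theorem and the finiteness of the conductor of an irreducible plane curve germ, together with the formal properties $1$--$5$ of the intersection number and the identity $\ord(\rho)=\dim_\C\mathcal O_1/\mathcal O_1\langle\rho\rangle$. For (1), assume first $C_f$ is irreducible and let $\gamma$ be a Puiseux parametrisation. The pull-back $\gamma^{\ast}\colon\mathcal O_2\to\mathcal O_1$, $h\mapsto h\circ\gamma$, has kernel $\mathcal O_2\langle f\rangle$ (since $\mathcal O_2\langle f\rangle$ is prime and $\gamma^{\ast}h=0$ forces $h$ to vanish on $C_f$), hence induces an injection of the domain $R:=\mathcal O_2/\mathcal O_2\langle f\rangle$ into $\mathcal O_1$; this is a finite birational extension, so $c:=\dim_\C(\mathcal O_1/R)<\infty$ --- this is where Puiseux's theorem and the finiteness of the conductor are used (see \cite{Hefez}). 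If $f\mid g$ then $m(f,g)$ and $\ord(g\circ\gamma)$ are both infinite and there is nothing to prove; otherwise $g$ is a non-zero-divisor on $R$, and computing $\dim_\C(\mathcal O_1/gR)$ in two ways --- once via $gR\subseteq R\subseteq\mathcal O_1$, once via $gR\subseteq g\mathcal O_1\subseteq\mathcal O_1$ and the isomorphism $\mathcal O_1/R\cong g\mathcal O_1/gR$ induced by multiplication by $g$ --- gives $\dim_\C(R/gR)=\dim_\C(\mathcal O_1/g\mathcal O_1)$, that is, $m(f,g)=\ord(g\circ\gamma)$. For the reducible case, if $f=f_1\cdots f_r$ with $\gamma_j$ a parametrisation of $C_{f_j}$, property $3$ gives $m(f,g)=\sum_j m(f_j,g)=\sum_j\ord(g\circ\gamma_j)$.

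Part (4) is then immediate: for $f$ irreducible, $m(f,g)=\ord(g\circ\gamma)$ is infinite exactly when $g\circ\gamma\equiv 0$, i.e. when $g$ vanishes on $C_f$; as $\mathcal O_2\langle f\rangle$ is prime this holds precisely when $f\mid g$, i.e. when $C_f$ is a component of $C_g$. For (2), I would use (1) to evaluate the contact of $C_f$ with a line $\ell=ax+by$: $m(f,\ell)=\ord(a\,t^m+b\,y(t))$, which equals $m$ when $a\neq 0$ and equals $\ord y(t)>m$ when $a=0$. Since a line divides the tangent cone $F_m$ of $f$ if and only if it has contact $>m$ with $C_f$, the only line dividing $F_m$ is $\{y=0\}$, so $F_m$ is a nonzero scalar multiple of $y^m$; rescaling $f$ yields $f=y^m+g$ with $g\in\mathcal O_2^{m+1}$.

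For (3), after a linear change of coordinates we may assume the unique tangent line of the irreducible germ $C_f$ is $\{y=0\}$, so (by (2), or directly by Puiseux) $\gamma(t)=(t^r,y(t))$ with $r=\mult(f)$ and $\beta:=\ord y(t)>r$. Write $g=\sum_{k\ge 0}G_{s+k}$ with $s=\mult(g)$ and $G_{s+k}$ homogeneous of degree $s+k$. A monomial $c_{ij}x^iy^j$ of $G_{s+k}$ contributes to $g\circ\gamma$ a term of order $ri+j\beta$, which is $\ge r(s+k)$ with equality only when $j=0$; hence the least order occurring in $g\circ\gamma$ is $rs$, attained exactly once, by the $x^s$-term of $G_s$, whose coefficient is $G_s(1,0)$. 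Transversality of the tangent cones means $\{y=0\}$ is not a tangent line of $C_g$, i.e. $G_s(1,0)\neq 0$, so this term is not cancelled and $\ord(g\circ\gamma)=rs$, giving $m(f,g)=rs=\mult(f)\,\mult(g)$.

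The main obstacle is the structural input in (1): identifying the normalisation of $\mathcal O_2/\mathcal O_2\langle f\rangle$ with $\mathcal O_1$ via a Puiseux parametrisation, and knowing that the colength $c$ of the conductor is finite. Once that is granted, everything else is elementary bookkeeping with orders of power series in one variable.
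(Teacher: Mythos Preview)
Your proposal is correct, but the route differs from the paper's in two of the four parts.

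For (1), the paper does not use normalisation and conductor finiteness. Instead it invokes the analytic characterisation of $m(f,g)$ as the local number of preimages of a regular value of $(f,g):(\C^2,0)\to(\C^2,0)$ (which it attributes to \cite{Arnold}, Chapter~5): setting $G=g\circ\gamma$, a regular value $c$ of $G$ has $\ord(G)$ preimages, and one checks that $\gamma$ carries $G^{-1}(c)$ bijectively to $f^{-1}(0)\cap g^{-1}(c)$ and that $(0,c)$ is a regular value of $(f,g)$. Your algebraic argument via $R=\mathcal O_2/\langle f\rangle\hookrightarrow\mathcal O_1$ and the double count of $\dim_\C(\mathcal O_1/gR)$ is the standard one in \cite{Hefez}; it is self-contained within commutative algebra (modulo the finiteness of the $\delta$-invariant), whereas the paper's proof trades that input for the nontrivial degree-theoretic identification of $m(f,g)$ with a point count.

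For (2), the paper does not go through the tangent cone at all: it writes $y(t)=\sum_{j=0}^{m-1}t^j\alpha_j(t^m)$, forms the $m$ relations obtained from $t^r y(t)$ ($0\le r\le m-1$) after substituting $x=t^m$, and packages them as $A(x,y)T=0$ with $T=(1,t,\dots,t^{m-1})^{\!\top}$; the determinant of $A$ is the sought equation $y^m+g(x,y)=0$. Your argument---using (1) to compute $m(f,ax+by)$ and thereby pin down $F_m=cy^m$---is shorter and perfectly valid, but yields only the shape of the lowest homogeneous part, whereas the paper's construction actually exhibits the equation. Parts (3) and (4) in your proposal and in the paper are essentially the same computation, organised with the roles of $f$ and $g$ swapped.
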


\begin{proof}
(1) It is enough to prove the result when $f=0$ is irreducible, by property (3) of the multiplicity stated earlier. So suppose that we have a parametrisation $\gamma:(\C,0)\to (\C^2,0)$ of $f=0$. Consider the function $G=g\circ \gamma:(\C,0)\to(\C,0)$ and let $c$ be a regular value of $G$ near $0\in\C$. Then the  set $G^{-1}(c)$ will (locally) consist of $\ord (G)$ points. 
On the other hand, $t\in G^{-1}(c)$ if and only if $p=\gamma(t)\in g^{-1}(c)\cap f^{-1}(0)$. 
It remains to show that $(0,c)$ is a regular value of $(f,g):(\C^2,0)\to (\C^2,0)$. If it were not, then for some $t\in G^{-1}(c)$ with $p=\gamma(t)$, the differentials $df(p)$ and $dg(p)$ would be linearly dependent, implying that $t$ is a critical point of $G$, contradicting the regularity of $c$.

(2) We may write 
$$y(t)=\sum_{j=0}^{m-1}t^j\alpha_j(t^m)=\sum_{j=0}^{m-1}t^j\alpha_j(x).$$ 
Consider $t^ry(t), 0\le r\le m-1$, replacing $t^m$ by $x$ at each stage. 
Rearranging, we obtain $A(x,y)T=0$, where $A$ is an $m\times m$ matrix whose entries are holomorphic functions in $(x,y)$ and $T$ is the transpose of $(1,t,\ldots,t^{m-1})$. Taking the determinant gives the equation for $\gamma(t)$ with the required property. 
	
(3) By a diffeomorphism, we may suppose that the tangent cones of $f$ and $g$ are given by 
$x=0$ and $y=0$, respectively. Then we can parametrise $g=0$ as $\gamma(t)=(t^{m_1},y(t))$.
By part (2), we may write $f(x,y)=x^{m_2}+f_1(x,y)$, where  both $y(t)$ and $ f_1(x,y)$ have orders  greater than $ m_1$ and $m_2$, respectively. Then $\ord (f\circ \gamma) = m_1m_2$, as required.

(4) Parametrising $f=0$ by $\gamma$, we have $m(f,g)=\infty$ if and only if $g\circ \gamma$ vanishes identically, i.e., if and only if $f=0$ is a component of $g=0$.
\end{proof}


\subsection{Contact between singular and smooth curves}
Suppose we are given the germ of a plane curve $(C_\gamma,0)$ realised by a parametrisation $\gamma:(\C,0)\to (\C^2,0)$. Given any germ of a submersion $h:(\C^2,0)\to (\C,0)$, we consider the order of $h\circ \gamma$. Varying over all such submersions, we obtain a set of natural numbers, which we denote by ${\cal I}(\gamma)$.

Similarly, suppose we are given a plane curve $(C_f,0)$ defined by the equation $f=0$, where 
$f:(\C^2,0)\to (\C,0)$. Given any germ of an immersion $\alpha:(\C,0)\to (\C^2,0)$, we consider the order of $f\circ \alpha$. Varying over all such immersions, we again obtain a set of natural numbers, which we denote by   ${\cal I}(f)$. 

We could naturally extend the notion of ${\cal I}(\gamma)$ to multi-germs $\gamma:(\C,S)\to (\C^2,0)$, where $S\subset \C$ is a finite set, in order to maintain the symmetry between the two approaches. We leave the details to the reader. 

The results that follow are related to those in \cite{Porteous}, where the maps $\alpha$ and $h$ are referred to as {\it probes}. Clearly, if $\gamma$ is an immersion and $f$ a submersion, then ${\cal I}(\gamma)={\cal I}(f)=\N$.

\begin{theo}\label{theo:probs}
{\rm (1)} ${\cal I}(\gamma)$ is an ${\cal A}$-invariant of $\gamma$, and ${\cal I}(f)$ is a ${\cal K}$-invariant of $f$.

{\rm (2)} If $(C_f,0)$ and $(C_\gamma,0)$ define the same curve, then ${\cal I}(\gamma)={\cal I}(f)$.

{\rm (3)} If $f$ is irreducible and singular, then 
$$
{\cal I}(f)=\{km:  1\le k<\frac{\beta(f)}{m}\}\cup\{\beta(f)\},
$$ 
where $m=\mbox{\rm \mult}(f)$, and $\beta(f)$ is the first Puiseux characteristic exponent of $f$. In particular, ${\cal I}(f)=\{ m, \beta(f)\}$ has exactly two elements if and only if $\beta(f)<m$, with $m$ occurring when the the curve representing the probe is transverse to the tangent cone of $f$,  and $\beta(f)$ occurring when it is not.

{\rm (4)} If $f$ and $g$ are irreducible and their tangent cones are transverse, then  
$$
{\cal I}(fg)= \left(\mbox{\rm \mult}(f)+{\cal I}(g)\right) \cup \left( \mbox{\rm \mult}(g)+{\cal I}(f) \right).
$$

{\rm (5)} If $f_i, i=1,\ldots,k,$ are irreducible, then  
$${\cal I}(f_1 \cdots  f_k)\subset  {\cal I}(f_1)\cup\cdots\cup{\cal I}(f_k).$$ 
In particular, if the $f_i$ are irreducible and singular, then  ${\cal I}(f_1\cdots f_k)$ is finite. 

{\rm (6)} The set ${\cal I}(f)$ is unbounded if and only if $f=0$ contains a smooth component. 

{\rm (7)} 
Suppose $f:(\C^2,0)\to (\C,0)$ is a ${\cal K}$-finite germ with decomposition into irreducible factors $f=\Pi_{1\le j\le k, 1\le i\le r_j}f_{ij}$, where for each $j$,  $f_{ij}, i=1,\ldots, r_j,$ share the same tangent direction $L_j$, and the directions $L_j$ are pairwise distinct. Assume also that that $\beta(f_{ij})<\mbox{\rm \mult}(f_{ij})=m_{ij}$ for all $i, j$. Let $M_j=\sum_{i=1}^{r_j} m_{ij}$,  $\beta_j=\sum_{i=1}^{r_j}\beta(f_{ij})$ and $M=\sum_{j=1}^{s} M_j$. 
Then 
$$
{\cal I}(f)=\{M, M-M_1+\beta_1,\ldots,M-M_k+\beta_k\}.
$$ 
\end{theo}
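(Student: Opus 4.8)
The plan is to reduce everything to the single-branch case of Theorem~\ref{theo:probs}(3), using nothing more than the fact that the order of a product is the sum of the orders. Recall that a probe for ${\cal I}(f)$ is a germ of an immersion $\alpha:(\C,0)\to(\C^2,0)$; write $L(\alpha)$ for its tangent line at the origin. Since $f=\prod_{j=1}^{k}\prod_{i=1}^{r_j}f_{ij}$, for every probe $\alpha$ we have
\[
\ord(f\circ\alpha)=\sum_{j=1}^{k}\sum_{i=1}^{r_j}\ord(f_{ij}\circ\alpha),
\]
so it suffices to understand each term $\ord(f_{ij}\circ\alpha)$.

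For this I would apply Theorem~\ref{theo:probs}(3) to each $f_{ij}$, which is irreducible and singular (its first Puiseux exponent $\beta(f_{ij})$ being finite): by the standing hypothesis $\beta(f_{ij})<m_{ij}$, the set ${\cal I}(f_{ij})=\{m_{ij},\beta(f_{ij})\}$ has exactly two elements, and moreover $\ord(f_{ij}\circ\alpha)=m_{ij}$ precisely when $\alpha$ is transverse to the tangent cone of $f_{ij}$, that is, when $L(\alpha)\neq L_j$, while $\ord(f_{ij}\circ\alpha)=\beta(f_{ij})$ precisely when $L(\alpha)=L_j$. Thus $\ord(f_{ij}\circ\alpha)$ depends only on whether or not $L(\alpha)=L_j$; since the lines $L_1,\dots,L_k$ are pairwise distinct, it follows that $\ord(f\circ\alpha)$ depends only on which, if any, of these lines coincides with $L(\alpha)$.

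A short case analysis then finishes the proof. If $L(\alpha)\notin\{L_1,\dots,L_k\}$, every branch contributes its multiplicity and $\ord(f\circ\alpha)=\sum_{j=1}^{k}\sum_{i=1}^{r_j}m_{ij}=\sum_{j=1}^{k}M_j=M$. If $L(\alpha)=L_{j_0}$ for the (unique) index $j_0$ with this property, then the branches with tangent direction $L_j$, $j\neq j_0$, still contribute $M_j$, whereas each $f_{ij_0}$ contributes $\beta(f_{ij_0})$, so that $\ord(f\circ\alpha)=\sum_{j\neq j_0}M_j+\sum_{i=1}^{r_{j_0}}\beta(f_{ij_0})=M-M_{j_0}+\beta_{j_0}$. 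Each of these values is attained: taking $\alpha$ to be the linear embedding $t\mapsto tv$ with $v$ spanning $L_{j_0}$ realises $M-M_{j_0}+\beta_{j_0}$, and choosing $v$ outside the (finite) union $L_1\cup\dots\cup L_k$ realises $M$. Hence ${\cal I}(f)=\{M,\,M-M_1+\beta_1,\dots,M-M_k+\beta_k\}$, with $M=\sum_{j=1}^{k}M_j$.

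I do not anticipate a genuine obstacle once Theorem~\ref{theo:probs}(3) is in hand: the rest is bookkeeping on top of the additivity of the order. The one point worth making explicit is that a \emph{single} immersion $\alpha$ tangent to $L_{j_0}$ must contribute the value $\beta(f_{ij_0})$ to \emph{every} branch $f_{ij_0}$ with that tangent direction, not merely to a generic one; this is exactly what the two-element case of (3) provides, since there the non-transversality of the probe leaves no room for any intermediate order. One should also note that the pairwise distinctness of the $L_j$ is precisely what makes the two cases above exhaustive and mutually exclusive, so that no further values can occur.
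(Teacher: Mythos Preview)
Your proposal addresses only part (7), and for that part it is correct and is precisely the argument the paper has in mind: the paper's one-line proof of (7) reads ``there are only $k+1$ relevant tangent directions for probes $\alpha$ from (3), resulting in the indicated orders,'' and you have simply unpacked this, using additivity of order together with the two-element case of (3) to compute $\ord(f\circ\alpha)$ according to whether $L(\alpha)$ hits one of the $L_j$ or none. Your explicit remark that under the hypothesis $\beta(f_{ij})<2m_{ij}$ a tangent probe is forced to give exactly $\beta(f_{ij})$ for \emph{every} branch with that tangent is the only point where one might worry, and you have identified and justified it correctly.
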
 

\begin{proof}
(1), (2) are straightforward. 

For (3), suppose that $\gamma(t)=(t^m,y(t))$ with $\ord(y(t))>m$. Write $f=ax+by+O(2)$.
Then $\ord(f\circ \gamma)=m$ if $a\ne 0$. If $a=0$ then $b\ne 0$, and by a change of coordinates of the form $(x,y)\mapsto (x,y/b+A(x,y)),$ where $A\in {\cal M}_2^2$,  we can reduce $f$ to the form  $y$, 
while $\gamma$ remains in the same form $(t^m,y(t))$. Thus, ${\cal I}(\gamma)$ consists of the set $\{\ord(y(t))\}$,  as $(t^m,y(t))$ ranges over all parametrisations ${\cal A}$-equivalent to $\gamma$.  The result now follows.

For (4), if the tangent cones are transverse, then we can write the product as $(x^r+F(x,y))(y^s+G(x,y))$, where $F\in {\cal M}^{r+1}, G\in {\cal M}^{s+1},$ and  $r=\mult(f), s=\mult(g)$. Any germ of an immersion can be written in the form $(t,y(t))$ or $(x(t),t)$. In the former case,  substitution gives an expression with order $r+\ord(g\circ \gamma)$, and in the latter case, $s+\ord(f\circ \gamma)$. 

For (5), clearly $\ord((f_1\cdots f_k)\circ \gamma)=\sum_{i=1}^k \ord(f_i\circ \gamma)$. 

For (6), if all the irreducible components of $f=0$ are singular, then ${\cal I}(f)$ is finite by (5). 
On the other hand, if $f=0$ has a smooth component, say $y=0$, consider the immersion $\alpha(t)=(t,t^n)$. Clearly, $\ord(f\circ \alpha)=n$, so ${\cal I}(f)$ is unbounded.   

Finally, for (7), there are only $k+1$ relevant tangent directions for probes $\alpha$ from (3), resulting in the indicated orders. 
\end{proof}

\begin{exs}
{\rm
The $\mathcal K$-simple singularities of germs of functions were classified by Arnold. When $n = 2$, they are $\mathcal{K}$-equivalent  (over $\K=\C$) to the following normal forms:
$$\begin{array}{ll}
	A_k:&x^2 + y^{k+1}, k \geq 1, \\ 
	D_k:& x^2 y+y^{k-1}, k \geq 4, \\
	E_6:& x^3+y^4, \\
	E_7:&x^3+xy^3, \\
	E_8:&x^3+y^5.
\end{array}
$$

We now compute ${\cal I}(f)$ for each of these singularities using Theorem~\ref{theo:probs}.

(1) $A_{2k}$-singularity: there is a single branch, and the Puiseux exponent is $\beta(f)=2k+1$, while the multiplicity of $f$ is equal to $2$. By Theorem \ref{theo:probs}(3), we have:
 $${\cal I}(A_{2k})=\{2, 4,\ldots, 2k, 2k+1\}.$$ 

(2) $A_{2k+1}$-singularity: for the case $A_1$, one can check that  
${\cal I}(A_1)=\{l\in \mathbb N: l\ge 2\}$. 

For $k\ge 2$, using the normal form $x^2-y^{2k}=(x-y^k)(x+y^k)$, so the curve has two smooth branches. The probes $\alpha(t)$ can be written in the form $(t,y(t))$ or $(x(t),t)$, with contact respectively $(t-y(t)^k)(t+y(t)^k), (x(t)-t^k)(x(t)+t^k)$.  In the first case, the order of  $f\circ\alpha$ is $2$, and from the second case, we see that 
$$
{\cal I}(A_{2k+1})=\{2, \ldots, 2k, 2k+j:  j\ge 2\}.
$$ 

(3) $D_k$-singularity: we use the form  $f=y(x^2+y^{k-2})$, which consists of two branches: a smooth branch ($A_0$) 
and a singular branch of type $A_{k-3}$. The branches have transverse  tangent cones, so by Theorem \ref{theo:probs} (4), we have
$$
{\cal I}(D_k)=\left(1+{\cal I}(A_{k-3})\right) \cup \left(2+{\cal I}(A_0)\right) =\{l\in \mathbb N, l\ge 3\}.
$$ 

(4) $E_6, E_7, E_8$: it is not hard to show that  
$$
{\cal I}(E_6)=\{3, 4\}, \quad {\cal I}(E_7)=\{3\}\cup\{j\in \mathbb N: j\ge 5\},\quad  {\cal I}(E_8)=\{3,4, 5\}.
$$ 
}
\end{exs}


\section{Geometric invariants of singular plane curves} \label{sec:vert}

The fundamental question we address is: when the germ of a plane curve is perturbed, how many inflections and vertices emerge? As in earlier discussions, it is crucial to distinguish whether the curve is defined by a parametrisation or by an implicit equation, since the generic deformations in these two cases differ—even topologically.

The parametrised case has been studied in detail in \cite{DiasTari} and \cite{wallflat}. Here, we examine both approaches in parallel.

Given a germ of an ${\mathcal A}$-finite map $\gamma:(\C,0)\to(\C^2,0)$, or a ${\cal K}$-finite function $f:(\C^2,0)\to (\C,0)$, we can mimic the usual definition of curvature and its derivative at regular points of $\gamma(\C)$ and $C_f$. 
 
For vertices, we consider contact with circles passing through the origin. Such circles are given by an equation $A(x^2+y^2)+2Bx+2Cy=0$.  If the curve is parametrised, then the origin is a vertex if the contact of the curve with a circle at 
$ t=0$ is  of order $\ge 4$, equivalently, 
$$
\begin{pmatrix}
(xx'+yy') & x' & y'\\
(xx''+yy''+x'^2+y'^2) & x'' & y''\\
(xx'''+yy''') +3(x'x''+y'y'') & x''' & y'''
\end{pmatrix}
\begin{pmatrix}
A\\B\\C
\end{pmatrix}
\begin{matrix}
\\=\\
\end{matrix}
\begin{pmatrix}
0\\0\\0
\end{pmatrix}.
$$

(Note that this includes the lines as degenerate circles $A=0$.)
We obtain a linear system in the variables $A,B,C$, which admits a non-trivial solution if and only if the (Wronskian) determinant vanishes, that is, 
$$v_{\gamma}=(x'^2 + y'^2)(x'y''' -x'''y') + 3(x'x''+y'y'')(x''y'-x'y'')=0.$$ 

Note that when $A=0$, the equation $A(x^2+y^2)+2Bx+2Cy=0$ reduces to the family of lines through the origin. In this case, the first two equations of the linear system involve only the unknowns $B$ and $C$, and they have a non-trivial solution if and only if the determinant
 $i_{\gamma}=x'y''-x''y'$ vanishes at $t=0$.

The corresponding conditions for inflections and vertices in the case of curves $C_f$ 
defined implicitly by $f=0$ are obtained in the smooth case from the implicit function theorem. 

\begin{defn} \label{def:IfVf}

{\rm (1)} We say that the origin  is an {\it inflection of $C_\gamma$} 
when 
$i_\gamma(0)=0$, where 
$$i_\gamma(t)=x'(t)y''(t)-y'(t)x''(t).$$

We say that the origin is an {\it inflection of  $C_f$} when $(x,y)=(0,0)$ is a solution of the equations
\begin{equation}\label{eq:sis_I}
	\left\{\begin{array}{l}
		f(x,y) = 0,\\
		i_f(x,y) = 0,
	\end{array}\right.
\end{equation}
where 
$$
i_f = f_y^2f_{xx} -2  f_x f_yf_{xy}+ f_x^2f_{yy}.
$$ 

{\rm (2)} We say that the origin is a vertex of $C_\gamma$ when  $v_\gamma(0)=0$, where $$v_\gamma(t)=((x'^2+y'^2)(x'y''-x''y')+3(x'x''+y'y'')(x''y'-x'y'')).$$ 

We say that the origin is a vertex of $C_f$ when $(x,y)=(0,0)$ is a solution of the equations
\begin{equation}\label{eq:sis_V}
\left\{\begin{array}{l}
f(x,y) = 0,\\
v_f(x,y) = 0,
\end{array}\right.
\end{equation}
where
$$
\begin{array}{rl}
	v_f = & (f_x^2+f_y^2) \left( f_{x}^3 f_{yyy}-3 f_{x}^2 f_{y} f_{xyy} +3 f_{x} f_{y}^2 f_{xxy} -f_{y}^3 f_{xxx} \right)\\
	& -3((f_x^2- f_y^2)f_{xy} - f_xf_y(f_{xx} -f_{yy}) )(f_y^2f_{xx} -2  f_x f_yf_{xy}+ f_x^2f_{yy}).
\end{array}
$$

{\rm (3)} We 
define the number of inflections $I_\gamma$ and vertices $V_\gamma$ of a parametrised curve $C_\gamma$, concentrated at the origin, by
$$
I_{\gamma}=\ord\ i_\gamma \quad \textrm{and} \quad V_{\gamma}=\ord\ v_\gamma.
$$

For curves $C_f$ given implicitly by $f=0$, we 
define the number of inflections $I_f$ and vertices $V_f$ of $C_f$, concentrated at the origin, by
$$
I_f = m(f,i_f) \quad \textrm{and} \quad V_f = m(f,v_f).
$$

{\rm (4)}  We say that an inflection $($resp. vertex$)$ is \textit{ordinary/simple} when $I_f = 1$ $($resp. $V_f = 1)$. 

\end{defn}

\begin{rems}
{\rm 
(1) The integers $I_\gamma$ and $  V_\gamma$ were defined in \cite{DiasTari}; note that $I_\gamma, I_f$ and $V_\gamma, V_f$   do not  generally coincide when $f$ is irreducible and $C_f$ and $ C_\gamma$ define the same germ.

(2) If we have a homogeneous polynomial $F$ such that $F(0:0:1)=0$, let $f(x,y)=F(x,y,1)$ be its affine form. Then the intersection number of the Hessian of $F$, restricted to the affine chart $z=1$, with $F=0$ at the point $(0:0:1)$ is equal to $I_f$.   
}
\end{rems}

\begin{theo}\label{theo:f=gh}
{\rm (1)}  The integers $I_{\gamma}$ and $I_f$ are affine invariants, while $V_\gamma$ and $ V_f$ are invariant under similarities, interpreted as above in the complex case. An inflection {\rm(}resp. vertex{\rm)} is ordinary if and only if $f$ is a submersion at the origin and the origin is an ordinary inflection {\rm(}resp. vertex{\rm)}, that is, the curve has $3$-point $($resp. $4$-point$)$ contact with its tangent line $($resp. osculating circle$)$. 

{\rm (2)} At smooth points of $C_f$, {\rm Definition \ref{def:IfVf}(4)} corresponds to the classical notions of inflection and vertex. Moreover, $I_f+2$ is the order of contact between $C_f$ and its tangent line at the origin.  If the centre of curvature is finite, then $V_f$ is the order of contact between $C_f$ and its osculating circle; otherwise, the point is an inflection, and $V_f= I_f-1$.  

{\rm (3)} Any singular point of $C_f$ is, by {\rm Definition \ref{def:IfVf}}, both an inflection and a vertex.

{\rm (4)} The integers $I_\gamma$ and $ V_\gamma$ $($resp. $I_f$ and $V_f)$, when finite, depend only on a finite jet of $\gamma$ $($resp. $f)$.

{\rm (5)} If $f=gh$, and $I_g$ and $I_h$ are finite, then $I_f$ is finite and 
$$
I_f=I_g+I_h+6m(g,h).
$$
Similarly, if $V_g$ and $V_h$ are finite, then $V_f$ is finite and 
 $$
 V_f=V_g+V_h+12 m(g,h).
 $$
 
{\rm (6)} The integers $I_{\gamma}$ and $V_{\gamma}$ are ${\mathcal R}$-invariants, while $I_f$ and $ V_f$ are ${\cal C}$-invariants.

{\rm (7)}  The invariant $I_\gamma$ $($resp. $I_f)$ is infinite if and only if $(C_\gamma,0)$ $($resp. $(C_f,0))$ contains the germ of a line. Likewise,  $V_\gamma$ $($resp. $V_f)$ is infinite if and only if $(C_\gamma,0)$ $($resp. $(C_f,0))$  contains the germ of a complex circle or line.
\end{theo}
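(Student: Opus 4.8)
Everything is deduced from two ``cocycle'' identities together with the intersection-number calculus of \S\ref{subs:MultInter} and a reduction of the regular case to classical differential geometry. The identities are: if $f=gh$, then in $\mathcal{O}_2/\langle g\rangle$ one has
$$i_{gh}\equiv h^{3}\,i_{g}\pmod{g},\qquad v_{gh}\equiv h^{6}\,v_{g}\pmod{g},$$
and symmetrically with $g$ and $h$ interchanged. Each is proved by substituting $f_x=g_xh+gh_x$, $f_{xx}=g_{xx}h+2g_xh_x+gh_{xx}$, and so on, into the defining expressions for $i_f$ and $v_f$ and discarding every term divisible by $g$; the computation for $i$ is short, the one for $v$ is longer but entirely analogous, and I expect verifying the $v$-identity (in particular that the exponent is exactly $6$) to be the main computational obstacle of the proof.

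\textbf{Parts (6) and (1): invariance.} Taking $h=u$ a unit in the cocycle identities gives $i_{uf}\equiv u^{3}i_f$ and $v_{uf}\equiv u^{6}v_f\pmod{f}$, so by properties 2 and 4 of $m$ we get $m(f,i_{uf})=m(f,i_f)$ and $m(f,v_{uf})=m(f,v_f)$: thus $I_f,V_f$ are $\mathcal{C}$-invariant. Under a reparametrisation $t=\phi(s)$ with $\phi'(0)\neq 0$, a direct chain-rule computation shows that $i_\gamma$ is multiplied by the unit $(\phi')^{3}$ and $v_\gamma$ by $(\phi')^{6}$ --- in the second case the extra terms produced by $\gamma'''$ cancel in pairs --- so $\ord(i_\gamma)$ and $\ord(v_\gamma)$ are unchanged; this is the $\mathcal{R}$-invariance of $I_\gamma,V_\gamma$. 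For (1), writing $i_f=-\det\bigl(\begin{smallmatrix}\mathrm{Hess}\,f & (\nabla f)^{t}\\ \nabla f & 0\end{smallmatrix}\bigr)$ shows that an affine change $\Phi$ with linear part $A$ multiplies $i_f$ by $(\det A)^{2}$, so $I_{f\circ\Phi}=I_f$ by property 5 of $m$; similarly $v_f$ and $v_\gamma$ acquire only a nonzero scalar under a similarity, in the sense of \S\ref{sec:prel}. Combining with the reparametrisation computation yields affine invariance of $I_\gamma$ and similarity invariance of $V_\gamma$.

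\textbf{Parts (3), (4), (2) and the ``ordinary'' clause.} At a singular point every monomial occurring in $i_f$ and in $v_f$ contains a factor $f_x$ or $f_y$, both of which vanish there, so $(0,0)$ solves \eqref{eq:sis_I} and \eqref{eq:sis_V}: this proves (3). If moreover $f$ has multiplicity $r\geq 2$ and $i_f,v_f\not\equiv 0$, inspection of leading terms gives $\mult(i_f)\geq 2$ and $\mult(v_f)\geq 4$, hence $I_f\geq r\,\mult(i_f)\geq 4$ and $V_f\geq 8$; so an ordinary inflection or vertex forces $f$ to be a submersion. When $f$ is a submersion I would rotate so that the tangent line at the origin is $\{y=0\}$ (a similarity when the tangent is non-isotropic) and, using Proposition~\ref{prop:multOrderParm}(2), write $f$ up to a unit --- hence, by (6), up to $\mathcal{C}$-equivalence --- as $y-h(x)$ with $h\in\mathcal{M}^{2}$. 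Then $i_f=-h''$, and along the graph parametrisation $t\mapsto(t,h(t))$ the function $v_f$ restricts to $v_\gamma=(1+(h')^{2})h'''-3h'(h'')^{2}$. Hence $I_f=\ord(h'')=\ord(h)-2$, which is the contact order with $\{y=0\}$ less $2$; if $h''(0)\neq 0$ the osculating circle is genuine and the stated relation between $V_f=\ord(v_\gamma)$ and the contact order with it is the classical vertex count, while if $h''(0)=0$ the leading term of $v_\gamma$ is $h'''$, so $V_f=\ord(h''')=\ord(h'')-1=I_f-1$ and the point is an inflection; the isotropic-tangent case, where the osculating circle degenerates, is handled separately. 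Part (4) is then immediate, since $m(f,\cdot)$ and $\ord(\cdot)$, when finite, depend only on finite jets of their arguments while $i_f,v_f$ (resp. $i_\gamma,v_\gamma$) depend only on suitably truncated jets of $f$ (resp. $\gamma$).

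\textbf{Parts (5) and (7).} For (5), split $m(gh,\cdot)=m(g,\cdot)+m(h,\cdot)$ by property 3 of $m$ and apply the cocycle identities: $m(g,i_{gh})=m(g,h^{3}i_g)=3\,m(g,h)+m(g,i_g)=3\,m(g,h)+I_g$, symmetrically for $h$, whence $I_f=I_g+I_h+6\,m(g,h)$; the same with $h^{6}$ gives $V_f=V_g+V_h+12\,m(g,h)$. Finiteness is automatic, since $f=gh$ being $\mathcal{K}$-finite forces $g,h$ coprime and so $m(g,h)<\infty$. For (7), $I_\gamma=\ord(i_\gamma)=\infty$ iff $i_\gamma\equiv 0$, i.e. iff $\gamma'$ and $\gamma''$ are everywhere proportional, i.e. iff $C_\gamma$ is the germ of a line; and by the cocycle identity, for a branch $f_j$ of $f$ one has $m(f_j,i_f)=\infty$ iff $f_j\mid i_{f_j}$, i.e. iff $i_{f_j}$ vanishes on $\{f_j=0\}$ --- and since the smooth points of $\{f_j=0\}$ are dense and $i_f$ vanishes at a smooth point exactly at a classical inflection, this forces $\{f_j=0\}$ to be everywhere inflectional, hence a line; so $I_f=\infty$ iff $C_f$ contains a line. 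The $V$-statements are proved the same way with $v$ in place of $i$ and ``vertex'' for ``inflection'', the relevant classical input being that a smooth curve all of whose points are vertices has a single point as evolute and so is a circle or a line; the one point requiring care throughout is the locus of isotropic tangent directions, where curvature is undefined and the osculating circle degenerates.
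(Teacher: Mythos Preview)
Your proposal is correct and follows essentially the same approach as the paper. Your ``cocycle identities'' $i_{gh}\equiv h^3 i_g\pmod g$ and $v_{gh}\equiv h^6 v_g\pmod g$ are precisely the paper's key observation $i_f=g^3 i_h+h^3 i_g+gh\,r_1$ (and its $v$-analogue) reduced modulo $g$; both proofs of (5) and (7) then proceed identically via the multiplicativity properties of $m(\cdot,\cdot)$ and a reduction to smooth points.

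The only substantive difference is in (4): the paper gives a sharper quantitative statement (if $f$ is singular and $I_f=k$, then $I_{f+h}=I_f$ for all $h\in\mathcal{M}^{k+1}$) via an explicit Nakayama argument, whereas you appeal to the general fact that finite intersection numbers depend on finite jets. Your argument is sufficient for the statement as written; the paper's version gives an explicit determinacy bound. Conversely, you supply more detail on (1), (2), and (6), which the paper dismisses as straightforward or immediate.
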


\begin{proof}
 Statements (1)–(3) are straightforward. For (4), we provide a proof only in the case of inflections; the vertex case follows similarly.
 
 Let ${\cal M}_2$ denote the maximal ideal in ${\cal O}_2$. The regular case is similar to the singular case, but in the singular case we can prove a stronger statement: if $f$ is singular and $I_f=k$, then $I_{f+h}=I_f$ for any $h\in{\cal M}^{k+1}$. 
 
Assume $f$ is singular and $I_f=k$.  By Nakayama's Lemma (\cite[Lemma 1.4 (ii)]{wall}),  we have  ${\cal M}_2^{k}\subset {\cal O}_2\langle f,i_f\rangle$. 
Now replace $f$ by $f+h$, where $h\in {\cal M}_2^{k+1}$. Since $f$ is singular, we have $i_{f+h}=i_f+H$, with  $H\in {\cal M}_2^{k+1}$. It follows that
$$
{\cal M}_2^k\subset {\cal O}_2\langle f,i_f\rangle 
\subset {\cal O}_2\langle f+h,i_{f+h}\rangle+ {\cal M}_2^{k+1}.
$$

Again applying Nakayama's Lemma, we conclude that  
${\cal M}_2^{k}\subset {\cal O}_2\langle f+h,i_{f+h}\rangle$,
which implies 
${\cal O}_2\langle f,i_f\rangle= {\cal O}_2\langle f+h,i_{f+h}\rangle$, 
and so $I_f=I_{f+h}$.

Now for the additivity statement in (5): we observe that  $i_f = g^3 i_h+h^3 i_g+g h r_1,$ for some $r_1$. Using properties of the multiplicity, we compute:
	$$\begin{array}{rcl}
		I_f & = & m(gh,g^3 i_{h}+h^3 i_{g}+g h r_1)\\
		 &=&m(g,h^3 i_{g})+m(h,g^3 i_{h})\\
		    & = & m(g,i_g)+m(h,i_h)+m(g,h^3)+m(h,g^3)\\
		    &=&I_g+I_h+6m(g,h).
	\end{array}
$$

Similarly, a direct computation shows that 
$v_f= g^6v_h +h^6v_g+ gh r_2$ for some $r_2$, and the the formula for $V_f$ follows in the same way.

Statement (6) is immediate from the definitions.

For (7), suppose that $I_f$ is infinite. Then $C_f$ and the zero set of $i_f$ share a component. At any smooth point 
$p$ of this shared component,  $I_f$ is infinite, and by (2) this implies that the curve $C_f$ has infinite contact with its tangent line — i.e., locally it contains a line. The same argument yields the other cases.
 \end{proof}
 
By applying induction to Theorem~\ref{theo:f=gh}(5), we arrive at the following result.

\begin{theo}\label{teo:calc_fat_irred}
	If $f=f_1 \dots f_n$, then:
$$
\begin{array}{c}
	I_f = \displaystyle{\sum_{i}^{n} I_{f_i}+6\sum_{i<j} m(f_i,f_j);}\\[0.2cm]
	V_f = \displaystyle{\sum_{i}^{n} V_{f_i}+12\sum_{i<j} m(f_i,f_j).}
\end{array}
$$
\end{theo}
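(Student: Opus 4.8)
The plan is to derive Theorem~\ref{teo:calc_fat_irred} from Theorem~\ref{theo:f=gh}(5) by a straightforward induction on the number $n$ of irreducible factors, using the additivity of the intersection number. The base case $n=1$ is trivial (the sums over $i<j$ are empty). For the inductive step, I would write $f = g\cdot h$ where $g = f_1\cdots f_{n-1}$ and $h = f_n$, and apply Theorem~\ref{theo:f=gh}(5) to this factorisation. Note that one must first check that $I_g$ (resp. $V_g$) is finite, so that the hypothesis of Theorem~\ref{theo:f=gh}(5) is met; this follows from the inductive hypothesis applied to $g$, which expresses $I_g$ as a finite sum of finite quantities (using that each $I_{f_i}$ is finite — here one should note this is part of the standing assumption, or invoke Theorem~\ref{theo:probs}(7)/Theorem~\ref{theo:f=gh}(7) that ensures finiteness away from smooth line components; in any case the statement is read as conditional on finiteness).

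The computation then proceeds as follows. By Theorem~\ref{theo:f=gh}(5),
$$
I_f = I_g + I_h + 6\,m(g,h) = I_{f_1\cdots f_{n-1}} + I_{f_n} + 6\,m(f_1\cdots f_{n-1}, f_n).
$$
By the inductive hypothesis, $I_{f_1\cdots f_{n-1}} = \sum_{i=1}^{n-1} I_{f_i} + 6\sum_{i<j\le n-1} m(f_i,f_j)$. For the last term, I would use property (3) of the intersection number (multiplicativity in each argument), $m(f_1\cdots f_{n-1}, f_n) = \sum_{i=1}^{n-1} m(f_i, f_n)$, so that $6\,m(f_1\cdots f_{n-1},f_n) = 6\sum_{i<n} m(f_i,f_n)$. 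Adding these contributions reassembles exactly $\sum_{i=1}^{n} I_{f_i} + 6\sum_{i<j\le n} m(f_i,f_j)$, completing the induction. The argument for $V_f$ is identical, with the coefficient $6$ replaced by $12$ and the same use of property (3) of $m(\cdot,\cdot)$.

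There is no real obstacle here — the only point requiring a moment of care is the bookkeeping of the double sum when splitting off $f_n$, i.e.\ recognising that $\{(i,j): i<j\le n\} = \{(i,j): i<j\le n-1\}\sqcup\{(i,n): i<n\}$, together with the invocation of multiplicativity of $m$ to collapse $m(f_1\cdots f_{n-1}, f_n)$ into the second of these index sets. One should also remark that the decomposition into irreducible factors is unique up to units and that $m$ is unchanged by multiplication by units (property (2)), so the right-hand side is well defined and independent of the order in which the induction is carried out.
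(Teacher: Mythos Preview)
Your proposal is correct and is exactly the approach the paper takes: the paper simply states that the result follows by applying induction to Theorem~\ref{theo:f=gh}(5), which is precisely the induction you spell out (splitting off $f_n$ and using multiplicativity of the intersection number). Your write-up in fact supplies more detail than the paper does.
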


One question is: what is the link between the invariant  $I_\gamma$ (resp. $V_\gamma$) and the order of contact between the curve and its tangent lines (resp. circles)? One might expect that the higher the order of contact between $C_\gamma$ and its tangent line the larger $I_\gamma$ might be. 
 
\begin{theo}\label{paramcase}
{\rm (1)} Let $\gamma:(\C,0)\to (\C^2,0)$. There is a unique osculating complex circle or line, that is one with maximal order of contact with $C_{\gamma}$ at the origin. If $\gamma$ is singular, this order of contact is finite. The maximal order of contact with a circle is denoted it by $\lambda(\gamma)$.

{\rm (2)} If $\gamma (t)=(t^m,at^n+O(n+1)), n>m$, then the order of contact between $C_\gamma$ and its tangent line is $n$ and $I_{\gamma}=m+n-3$.

{\rm (3)} $V_{\gamma}=I_{\gamma}+\lambda(\gamma)-3$.

{\rm (4)} Let $\gamma:(\C,0)\to (\C^2,0)$ be as above. 
\begin{itemize}
	\item[] If $n\ne 2m$, then $V_{\gamma}=3m+n-6$.
	\item[] If $n=2m$, then writing $y(t)=t^{2m}(a_{2m}+y_1(t)), y_1\in{\cal M}_1$ we have:
	$$
	V_{\gamma}=3m+n+\ord(a_{2m}t^{2m}(a_{2m}+y_1(t))^2-y_1(t))-3.
	$$
\end{itemize} 
\end{theo}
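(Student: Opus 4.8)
The plan is to reduce everything to explicit order computations for the two functions $i_\gamma$ and $v_\gamma$ evaluated on the normal form $\gamma(t)=(t^m, y(t))$ with $y(t)=at^n+O(t^{n+1})$, $n>m$. I would first note that parts (2)–(4) are essentially a single computation, so I would organise the argument around $v_\gamma$ and extract $i_\gamma$, $\lambda(\gamma)$ as by-products. For part (2): with $x=t^m$ one has $x'=mt^{m-1}$, and $y'=nat^{n-1}+\cdots$, $y''=n(n-1)at^{n-2}+\cdots$; plugging into $i_\gamma=x'y''-y'x''$ the leading terms do not cancel (the coefficient is $ma(n-1)-n(m-1)a = a(n-m)\neq 0$ up to a nonzero constant, after tracking the exact binomial coefficients), giving $\ord(i_\gamma)=m-1+n-2=m+n-3$, hence $I_\gamma=m+n-3$. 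The order-of-contact-with-the-tangent-line claim follows from Theorem \ref{theo:f=gh}(2), or directly since after the affine change bringing the tangent line to $\{y=0\}$ the contact order is $\ord(y(t))=n$.

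For part (3) I would invoke Theorem \ref{theo:f=gh}(2) and (5) conceptually, but the clean route is the direct one: $\lambda(\gamma)$ is by definition the maximal order of contact of $C_\gamma$ with a circle through a chosen basepoint, and the Wronskian-type construction preceding Definition \ref{def:IfVf} shows that the osculating circle/line is the one killing the first two rows of the $3\times3$ matrix, while $v_\gamma$ (a $3\times3$ Wronskian determinant) governs whether a fourth contact condition can also be met. The standard Wronskian identity here is that if $\phi_1,\phi_2,\phi_3$ are the three coordinate functions of the contact system then $\ord(\mathrm{Wronskian})=\ord(\phi_1)+\ord(\text{first reduced})+\cdots$; carrying this out gives $V_\gamma = I_\gamma + (\lambda(\gamma)-3)$, where the shift by $3$ accounts for the three contact conditions already absorbed into the definition of "osculating". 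I would make this precise by writing $v_\gamma$ in a factored form analogous to $v_f=g^6v_h+h^6v_g+ghr_2$ — here, schematically, $v_\gamma$ equals $i_\gamma$ times the derivative-of-curvature factor — and reading off orders.

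Part (4) is the computational heart and the main obstacle. Writing everything in powers of $t$: the factor $x'^2+y'^2 = m^2t^{2m-2}+\cdots$ has order $2m-2$ generically, but its order jumps precisely when the $t^{2m-2}$ term can be cancelled, which (tracking $y'=nat^{n-1}+\cdots$) happens exactly when $n=2m$ and then the surviving term involves $a_{2m}^2 t^{4m-2}$ competing against cross terms. So the dichotomy $n\neq 2m$ versus $n=2m$ in the statement is forced by whether $\ord(x'^2+y'^2)=2m-2$ or is larger. In the generic case $n\neq 2m$ one computes $\ord(v_\gamma)=(2m-2)+\ord(x'y'''-x'''y')$ combined with the lower-order surviving piece; tracking leading coefficients (and checking the coefficient is a nonzero polynomial in $a$, $m$, $n$ — this is the routine-but-delicate part) yields $\ord(v_\gamma)=3m+n-6$, i.e. $V_\gamma=3m+n-6$, consistently with part (3) since then $\lambda(\gamma)=3$ should hold. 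In the resonant case $n=2m$, set $y(t)=t^{2m}(a_{2m}+y_1(t))$ with $y_1\in\mathcal M_1$; then $x'^2+y'^2$ and the other Wronskian entries acquire an extra common factor, and careful bookkeeping shows the leftover order is exactly $\ord\!\left(a_{2m}t^{2m}(a_{2m}+y_1(t))^2 - y_1(t)\right)$, which is precisely the discrepancy between the curve and its osculating circle (that quantity is $\lambda(\gamma)-3$ shifted appropriately), giving the stated formula $V_\gamma = 3m+n+\ord\!\left(a_{2m}t^{2m}(a_{2m}+y_1(t))^2-y_1(t)\right)-3$. The real work — and where I expect sign/coefficient errors to lurk — is verifying that no unexpected cancellation occurs in the leading coefficients in the generic case, and correctly identifying the $t^{2m}(a_{2m}+y_1)^2-y_1$ combination as the controlling order in the resonant case; I would double-check both against a low-degree example such as $\gamma(t)=(t^2,t^4+\text{higher})$ (an $A$-type branch) where $m=2$, $n=4=2m$.
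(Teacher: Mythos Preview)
Your treatment of part~(2) is fine and agrees with the paper. The problems lie in parts~(3) and~(4).

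In part~(4), the mechanism you propose for the $n=2m$ dichotomy is incorrect. Since $\ord(y'^2)=2n-2>2m-2=\ord(x'^2)$, the order of $x'^2+y'^2$ is \emph{always} $2m-2$; there is no jump. The true source of the dichotomy is the vanishing of the leading \emph{coefficient} of $v_\gamma$: writing $v_\gamma=(x'^2+y'^2)\,i_\gamma'-3(x'x''+y'y'')\,i_\gamma$, both summands have order $3m+n-6$, and the combined leading coefficient is $m^3n(n-m)(n-2m)a$, which vanishes exactly when $n=2m$. Your consistency check ``$\lambda(\gamma)=3$'' is also wrong: for $n\neq 2m$ one has $\lambda(\gamma)=2m$, so that $V_\gamma=I_\gamma+2m-3=3m+n-6$ as stated.

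For part~(3) your plan is too vague to succeed, and no useful factorisation $v_\gamma=i_\gamma\times(\text{something})$ exists. The paper's key device is an adjugate identity: with $M(t)$ the $3\times3$ contact matrix and $D=(A,B,C)^T$, the relation $M(t)D=\alpha(t)$ yields $\mathrm{adj}(M)\,\alpha=\det(M)\,D$. The first row of $\mathrm{adj}(M)$ has entries with leading orders $m+n-5,\ m+n-4,\ m+n-3$, and for a genuine circle ($A\neq0$) of contact order $k$ the dot product with $\alpha$ has leading coefficient $adkmn(n-m)(k-m)(k-n)$ in degree $m+n+k-6$. Since this equals $A\det(M)$, one reads off $V_\gamma=m+n+k-6$ whenever $k\neq m,n$; the case analysis of part~(1)---which you omit entirely, but which determines the osculating circle and its contact order in each of the regimes $n<2m$, $n>2m$, $n=2m$---then supplies the relevant value of $k$. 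Part~(4) is deduced from (1)--(3) rather than computed directly; attempting the resonant case $n=2m$ by brute force without this structure is where your bookkeeping would become unmanageable.
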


\begin{proof}
(1) We claim that if $C_\gamma$ is singular, there is unique circle or line with maximum finite order of contact, that is, there is a unique osculating circle if we consider lines as circles through infinity.  The circles and lines through the origin are given by the equation  $A(x^2+y^2)+2Bx+2Cy=0$, where not all of $A, B, C$ are zero. 
In the singular case, by a rotation we may suppose that $x(t)=t^m, y(t)=t^nY(t),$ with $ Y(0)\ne 0$ and  $n>m$, and set
 $$
 d(t)=A(t^{2m}+t^{2n}Y(t)^2)+2Bt^m+2Ct^ny_1(t).
 $$ 
 
This has order $m$ unless $B=0$, so the osculating circle/line lies in the pencil given by $B=0$. 
\begin{itemize}
\item 
If $2m<n$, then the order is $2m$ unless $A=0$; so the unique osculating \lq circle\rq\ in this case is the line $y=0$, and the order of contact is $n$. All the tangent circles have contact of order $2m$. 
\item 
If $n<2m$, the order is $n$ unless $C=0$,  in which case the osculating circle is $x^2+y^2=0$, and the order is $2m$. 
\item 
If $n=2m$, setting $A=1$, write $y(t)=t^{2m}(a_{2m}+y_1(t)),$ with $ y_1\in{\cal O}_1$, and take $C=1/2a_0$. Then  the osculating circle is $x^2+y^2-y/a_0=0$, and the order of contact is $2m+\ord(a_{2m}t^{2m}(a_{2m}+y_1(t))^2-y_1(t))$. 
\end{itemize}

Note that the expression $a_{2m}t^{2m}(a_{2m}+y_1(t))^2-y_1(t))$ cannot vanish identically, since this would imply that $y_1(t)=y_2(t^m)$ for some $y_2$, contradicting that the parametrisation is reduced. Of course, it is classical that when $C_{\gamma}$ is smooth, there is a unique osculating circle or line, but in that case the order of contact may be infinite.

(2) For inflections, the lowest order terms appearing in the Wronskian are
$$
\begin{vmatrix}
mt^{m-1}& nt^{n-1}\\
m(m-1)t^{m-2} & n(n-1)t^{n-2}
\end{vmatrix}=mn(n-m)t^{m+n-3},
$$
 so $I_{\gamma}=m+n-3$.

(3) Suppose that the order of contact with the circle or line   $A(x^2+y^2)+2Bx+2Cy=0$ is $k=\lambda(\gamma)$, so $A(x(t)^2+y(t)^2)+2Bx(t)+2Cy(t)=2dt^k+O(k+1)$ for some $d\ne 0$. We then have
{\small 
	$$
\begin{pmatrix}
(xx'+yy') & x' & y'\\
(xx''+yy''+x'^2+y'^2) & x'' & y''\\
(xx'''+yy''') +3(x'x''+y'y'') & x''' & y'''
\end{pmatrix}
\begin{pmatrix}
A\\B\\C
\end{pmatrix}
\begin{matrix}
\\=\\
\end{matrix}
\begin{pmatrix}
dkt^{k-1}+O(k)\\dk(k-1)t^{k-2}+O(k-1)\\dk(k-1)(k-2)t^{k-3}+O(k-2)
\end{pmatrix}.
$$
}
Let us denote this as $M(t)D=\alpha(t)$. The adjugate of the matrix $M(t)$ has as its first row the entries $(x''y'''-x'''y'',x'''y'-x'y''',x'y''-x''y')$, whose leading (non-vanishing) terms are 
$$
(m(m-1)n(n-1)(n-m)at^{m+n-5}, mn(m-n)(m+n-3)at^{m+n-4},mn(n-m)at^{m+n-3}).
$$

Now $adj(M(t))M(t)=\det M(t)I$, and the order of $\det M(t)$ is $V_{\gamma}$. The entry of $adj(M(t))\alpha(t)$ of lowest degree is   $t^{m+n+k-6}$ with coefficient
$$
adkmn(n-m)(k-m)(k-n).
$$ 

We need $k\ne m, n$. 
\begin{itemize}
	\item If $n<2m$, we must have $B=C=0$ and $k=2m$. 
	\item If $n>2m$, then $B=0$ and $k=2m$. 
	\item If $n=2m$, then $B=0$ and we must choose $A, C$ so that  $d(t)$ has order greater than $2m$, that is, we have the osculating circle.
\end{itemize} 

In all cases $k=\lambda(\gamma)$, and comparing orders we have:
$m+n+k-6=V_{\gamma}$, so that $V_{\gamma}=I_{\gamma}+\lambda(\gamma)-3$.

(4) Follows immediately from (1), (2) and (3).
\end{proof}

\begin{rems}\label{rem:finiteIV}
{\rm
(1) In  Theorem \ref{paramcase}, we see that the invariants $I_\gamma$ and $V_\gamma$ can be expressed in terms of the multiplicity of $\gamma$ and the 
the order of contact with its tangent line, and with its osculating circle or line. Except in the case when that osculating circle is the degenerate one given by $x^2+y^2=0$, these circles and lines are smooth, and therefore  the orders of contact must lie in ${\cal I}(\gamma)$. 

(2) Given an ${\mathcal A}$-finite germ $\gamma:(\C,0)\to (\C^2,0)$ (resp. a ${\mathcal K}$-finite germ $f:(\C^2,0)\to (\C,0)$), there is an arbitrarily small deformation of $\gamma$ (resp. of $f$) such that the associated curve has only simple inflections and vertices.

(3) Except in the regular case, the curvature function $x'y''-y'x''$ is {\it never} versally unfolded by varying the parametrisation. Consequently, there is no immediate model describing the behaviour of simple inflections and vertices under deformation. For approaches to establishing such models, we refer the reader to \cite{DiattaGiblin, Garay, SalariTari, InfVertProfiles}.
}
\end{rems}

We have the following consequence of Theorems \ref{theo:probs}, \ref{theo:f=gh}, \ref{teo:calc_fat_irred}, and Remarks \ref{rem:finiteIV}.

\begin{cor}
Suppose that 
$f$ has no smooth components. Then the set ${\cal I}(f)$ is finite, and consequently both $I_f$ and $V_f$ are finite. 
\end{cor}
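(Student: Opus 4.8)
The plan is to verify the two assertions in turn, both by direct appeal to results already established. Since $f$ is ${\cal K}$-finite it factors as $f=f_1\cdots f_n$ into distinct irreducible germs, and the hypothesis that $f$ has no smooth components says precisely that $\mathrm{mult}(f_i)\ge 2$ for every $i$, i.e. each branch $f_i$ is singular. For the first assertion I would invoke Theorem~\ref{theo:probs}(5), which gives ${\cal I}(f)\subset{\cal I}(f_1)\cup\cdots\cup{\cal I}(f_n)$; each ${\cal I}(f_i)$ is finite by Theorem~\ref{theo:probs}(3) (an explicit finite list, since the first Puiseux exponent $\beta(f_i)$ of a singular ${\cal A}$-finite branch is finite), so ${\cal I}(f)$ is finite.

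For the finiteness of $I_f$ and $V_f$ I would argue via Theorem~\ref{theo:f=gh}(7): $I_f=\infty$ iff $(C_f,0)$ contains the germ of a line, and $V_f=\infty$ iff $(C_f,0)$ contains the germ of a complex circle or line. Both possibilities are incompatible with the hypothesis. A line through the origin is a smooth germ, so if it were contained in $C_f$ it would --- being irreducible --- coincide with one of the branches $C_{f_i}$, a smooth component. For a complex circle $A(x^2+y^2)+2Bx+2Cy=0$ through the origin, either $(B,C)\ne(0,0)$, so the gradient at the origin is $(2B,2C)\ne 0$ and the conic is a smooth germ, or $(B,C)=(0,0)$, which forces $A\ne 0$ and the conic to be $x^2+y^2=0$, the union $\{x=iy\}\cup\{x=-iy\}$ of two lines; in either case every irreducible component of the conic-germ is smooth, so its set-theoretic containment in $C_f$ again produces a smooth branch $C_{f_i}$, contradicting the hypothesis. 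Hence $I_f$ and $V_f$ are finite. (Since by Theorem~\ref{theo:probs}(6) the finiteness of ${\cal I}(f)$ is itself equivalent to $f$ having no smooth component, this second assertion is genuinely a consequence of the first.) Alternatively one can split the computation with Theorem~\ref{teo:calc_fat_irred}, reducing to the finiteness of each $I_{f_i}$ and $V_{f_i}$, handled by the same containment argument applied to the irreducible singular germ $f_i$, together with the finiteness of the cross terms $m(f_i,f_j)$, which holds because distinct irreducibles define distinct curves (Proposition~\ref{prop:multOrderParm}(4)).

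The argument is essentially mechanical, and I do not expect a serious obstacle. The one point that warrants a moment's care is the observation that every complex circle or line through the origin is a finite union of smooth curve germs, so that its containment in $C_f$ forces an actual smooth branch; once this is in hand, everything reduces to quoting Theorems~\ref{theo:probs}, \ref{theo:f=gh} and \ref{teo:calc_fat_irred}.
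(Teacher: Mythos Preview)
Your proposal is correct and follows essentially the same approach as the paper, which simply records the corollary as a consequence of Theorems~\ref{theo:probs}, \ref{theo:f=gh}, \ref{teo:calc_fat_irred}, and Remarks~\ref{rem:finiteIV} without spelling out the details. Your explicit handling of the degenerate circle $x^2+y^2=0$ as a union of two lines is precisely the point flagged in Remarks~\ref{rem:finiteIV}(1), and your two routes (via Theorem~\ref{theo:f=gh}(7) directly, or via Theorem~\ref{teo:calc_fat_irred} reducing to irreducible branches) are both among the paths the paper's citation list implicitly allows.
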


\section{Computing $I_f$ and $V_f$ and their ranges}\label{sec:irreducible}

By Theorem \ref{teo:calc_fat_irred}, to determine $I_f$ and $V_f$ for $f\in {\mathcal O}_2$, it is enough to compute these numbers for each irreducible component of $f$. So we assume that $f$ is irreducible with multiplicity $m$, and consider a parametrisation of $C_f$ in the form
\begin{equation}\label{eq:puiseux}
\gamma(t) = (t^m,y(t)),
\end{equation}
with $y(t) \in \mathcal{O}_1$ and $\ord(y) > m$, so that
\begin{equation}\label{eq:Hefez}
	m(f,g) = \dim_\mathbb{C} \frac{\mathcal{O}_2}{\langle f,g \rangle} = \dim_\mathbb{C} \frac{\mathcal{O}_1}{\langle g\circ \gamma \rangle} = \ord(g \circ \gamma),
\end{equation}
for any $g \in \mathcal{O}_2$ (see Proposition \ref{prop:multOrderParm}).

As above, the integers $I_\gamma$ and $V_\gamma$  represent the number of inflections and vertices that arise when deforming $\gamma$. If $\gamma$ is a parametrisation of $C_f$, then it is natural to expect that $I_\gamma \leq I_f$ and $V_\gamma \leq V_f$, since deformations of $\gamma$ arise from special deformations of $f$. The following result relates $I_\gamma$ and $I_f$, and $V_\gamma$ and $V_f$, where $\mu(f)=m(f_x,f_y)$ is the Milnor number of the germ $f$.

\begin{theo}\label{teo:princ}
Let $f : (\C^2,0) \to (\C,0)$ be a germ of an irreducible holomorphic function, 
and let $\gamma : (\C,0) \to (\C^2,0)$ be a parametrisation of the curve $C_f$. Then:
\begin{itemize}
\item[\rm (1)] {\rm (\cite[Proposition 4.1]{wallflat})} $I_f = I_\gamma + 3 \mu (f).$
\item[\rm (2)] $V_f = V_\gamma + 6 \mu (f).$
\item[\rm (3)] Let $f=f_1\ldots f_n$, where each $f_i$ is irreducible, and let $\gamma_i$ be a parametrisation of the corresponding branch $C_{f_i}$. Then: 
$$
\begin{array}{c}
	I_f=\displaystyle{\sum_{i=1}^n I_{\gamma_i}+3(\mu(f)+n-1),} \\[0.2cm]
	V_f=\displaystyle{\sum_{i=1}^n V_{\gamma_i}+6(\mu(f)+n-1).}
\end{array}
$$
\end{itemize}
\end{theo}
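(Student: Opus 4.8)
The plan is to prove (2) by reducing it, exactly as in part (1), to a single substitution identity along a Puiseux parametrisation, and then to deduce (3) from Theorem~\ref{teo:calc_fat_irred}.

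\textbf{Reduction and the auxiliary function $\mu$.} Keeping the standing assumption of this section, take $\gamma(t)=(t^m,y(t))$ with $\ord(y)>m$. Since $f\circ\gamma\equiv 0$, differentiating gives $(f_x\circ\gamma)\,x'+(f_y\circ\gamma)\,y'\equiv 0$; as the linear functional $v\mapsto v_1x'+v_2y'$ on $\C^2$ is non-zero for $t\neq 0$ (because $x'=mt^{m-1}\not\equiv 0$), the vectors $(f_x\circ\gamma,f_y\circ\gamma)$ and $(-y',x')$ are proportional, so there is a unique $\mu\in{\cal O}_1$ with
$$(f_x\circ\gamma,\ f_y\circ\gamma)=\mu\cdot(-y',\ x').$$
From $\mu=(f_y\circ\gamma)/(mt^{m-1})$ a quick order count (using $m(f,f_y)\ge\mult(f)(\mult(f)-1)$) shows $\mu\in{\cal M}_1$, with $\ord(\mu)=m(f,f_y)-(m-1)$ by \eqref{eq:Hefez}.

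\textbf{The substitution identities.} Differentiating $f\circ\gamma\equiv 0$ up to order three and substituting $\nabla f\circ\gamma=\mu(-y',x')$ at each stage, the restrictions to $\gamma$ of the Hessian form $f_{xx}x'^2+2f_{xy}x'y'+f_{yy}y'^2$, of the third-derivative form $f_{xxx}x'^3+3f_{xxy}x'^2y'+3f_{xyy}x'y'^2+f_{yyy}y'^3$, and of the ``rotated'' Hessian evaluated on $\gamma'$ and on the normal $n=(-y',x')$, can all be expressed in terms of $\mu$, $\mu'$ and the derivatives of $x,y$. Inserting these into the defining polynomials of Definition~\ref{def:IfVf} gives
$$i_f\circ\gamma=u_1\,\mu^3\,i_\gamma,\qquad v_f\circ\gamma=u_2\,\mu^6\,v_\gamma,$$
for units $u_1,u_2\in{\cal O}_1$ (in fact $u_1=-1$, $u_2=1$); the key point in the second identity is that the two summands of $v_f$ each contribute a multiple of $\mu^5\mu'$ and these cancel, so that no derivative of $\mu$ survives. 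Hence, by \eqref{eq:Hefez}, $I_f=3\ord(\mu)+I_\gamma$ and $V_f=6\ord(\mu)+V_\gamma$. Comparing the first of these with part (1) --- which is \cite[Proposition~4.1]{wallflat} --- yields $\ord(\mu)=\mu(f)$ (equivalently $m(f,f_y)=\mu(f)+\mult(f)-1$, the generic polar value, which could also be quoted directly), and substituting this into the second gives $V_f=V_\gamma+6\mu(f)$, which is (2).

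\textbf{The reducible case.} For $f=f_1\cdots f_n$ with each $f_i$ irreducible, Theorem~\ref{teo:calc_fat_irred} gives $I_f=\sum_iI_{f_i}+6\sum_{i<j}m(f_i,f_j)$ and $V_f=\sum_iV_{f_i}+12\sum_{i<j}m(f_i,f_j)$. I substitute parts (1) and (2) for each $I_{f_i},V_{f_i}$ and then use the standard formula $\mu(f)=\sum_i\mu(f_i)+2\sum_{i<j}m(f_i,f_j)-(n-1)$ for the Milnor number of a reduced plane curve in terms of its branches; the two copies of $\sum_{i<j}m(f_i,f_j)$ cancel against those coming from the Milnor-number formula, leaving $I_f=\sum_iI_{\gamma_i}+3(\mu(f)+n-1)$ and $V_f=\sum_iV_{\gamma_i}+6(\mu(f)+n-1)$, as claimed.

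\textbf{Expected main obstacle.} Everything is routine except the verification that $v_f\circ\gamma=u_2\,\mu^6\,v_\gamma$: this is a third-order chain-rule computation with several terms, in which the signs must be tracked carefully to see the $\mu'$-contributions drop out. The remaining steps are bookkeeping with the multiplicity properties of \S\ref{subs:MultInter}, the identification $\ord(\mu)=\mu(f)$ via part (1), and the cited additivity formula for $\mu$.
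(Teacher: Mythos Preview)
Your proposal is correct and follows essentially the same route as the paper: both differentiate $f\circ\gamma\equiv 0$ three times, substitute $\nabla f\circ\gamma=\mu\,(-y',x')$, and obtain the substitution identities $i_f\circ\gamma=(\text{unit})\cdot\mu^3\,i_\gamma$ and $v_f\circ\gamma=(\text{unit})\cdot\mu^6\,v_\gamma$, then deduce (3) from Theorem~\ref{teo:calc_fat_irred} together with the additivity formula $\mu(f)=\sum_i\mu(f_i)+2\sum_{i<j}m(f_i,f_j)-(n-1)$. The only minor difference is that the paper identifies $\ord(\mu)=\mu(f)$ directly via Teissier's Lemma (exactly your parenthetical ``generic polar value'' $m(f,f_y)=\mu(f)+\mult(f)-1$), whereas you bootstrap it from the cited part~(1); incidentally the paper finds $u_2=-1$ rather than $+1$, though this is of course irrelevant to the order computation.
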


\begin{proof} 
For (1), we give an alternative proof to that in  \cite{wallflat}. 
If $\gamma(t)=\left(x(t),y(t)\right)$ as above, so that $f(x(t),y(t)) \equiv 0$. Then:
\begin{equation}\label{eq:der_F_t}
	\begin{aligned}
		&f_x x' + f_y y' = 0, \\
		&f_x x'' + f_y y'' + f_{xx}(x')^2 + 2f_{xy}x'y' + f_{yy}(y')^2 = 0, \\
		&f_x x'''+ f_y y'''+ f_{xxx}(x')^3 + 3f_{xxy}(x')^2 y' + 3f_{xyy}x'(y')^2 + f_{yyy}(y')^3 \\
		&\qquad + 3\left(f_{xx}x'x'' + f_{xy}(x''y' + x'y'') + f_{yy}y'y''\right) = 0.
	\end{aligned}
\end{equation}

Here the derivatives of $f$ are evaluated at $\gamma(t)$, and the derivatives of $x$ and $y$ are taken with respect to $t$. 

From the first equation, we deduce that there exists  $u: (\C,0) \to \C$, 
with $u(0) \neq 0$, and an integer $k \geq 0$, such that:
\begin{equation}\label{eq:lambda}
f_x(x(t),y(t)) = u(t) t^k y'(t) \quad \textrm{and} \quad f_y(x(t),y(t)) = -u(t) t^k x'(t).
\end{equation}

Now observe that $f(0,y)=y^m$ for $y\ne 0$ small, so by Teissier's Lemma (see \cite{Ploski}):
\begin{equation}\label{eq:teissier}
	m(f(x,y),f_y(x,y)) = \mu(f)+m(f(0,y),x)-1=\mu(f)+m-1.
\end{equation}
       
Using (\ref{eq:Hefez}), (\ref{eq:teissier}) and (\ref{eq:lambda}) we get:
$$
\ord(f_y(x(t),y(t))) = \mu(f)+m-1=k+m-1,
$$
hence $k=\mu(f)$.

Multiplying the second equation of (\ref{eq:der_F_t}) by $u(t)^2 t^{2k}$ and using (\ref{eq:lambda}), we find:
$$
\left(f_{xx} f_y^2-2 f_{xy} f_x f_y+f_{yy} f_x^2\right)(x(t),y(t)) = -u(t)^3 t^{3k} (x''(t) y'(t)-y''(t)x'(t)),
$$ 
that is, $i_f(\gamma(t)) = - u(t)^3 t^{3k} i_\gamma(t)$. Therefore,
\begin{equation}\label{eq:If}
\begin{array}{rcl}
I_f & = & m(f,i_f)= \ord(i_f \circ \gamma)=\ord(- u(t)^3 t^{3k} i_\gamma(t))\\
& = &  \ord (i_\gamma)+3k =I_\gamma+3k=I_{\gamma}+3\mu(f).
\end{array}
\end{equation}

Multiplying the third equation of (\ref{eq:der_F_t}) by $(f_x^2+f_y^2) u(t)^3 t^{3k}$, and using (\ref{eq:lambda}) and the second equation of (\ref{eq:der_F_t}), it follows that $v_f(\gamma(t)) = - u(t)^6 t^{6k} v_\gamma(t)$. 
Using calculations similar to those in (\ref{eq:If}), we conclude that
$V_f =V_\gamma+ 6 k=V_\gamma+ 6\mu(f)$.

Part (3) follows from parts (1) and (2) above, together with Theorem \ref{teo:calc_fat_irred} and Theorem 6.5.1 of \cite{wall2}, which shows that
$\mu(f)=\displaystyle{\sum_i}\mu(f_i)+2\displaystyle{\sum_{i<j}} m(f_i,f_j)-n+1.$
\end{proof}

\begin{exs}\label{Ex:A1}
{\rm 
(1)	\textit{$A_1$-singularity}. 
Consider a curve $C_f$ with a Morse singularity ($A_1$). We can write $f=gh$, with $C_g, C_h$ regular and transverse. It is easy to see (or use Theorem \ref{theo:f=gh}(5)) that if $f=0, g=0$ do not have an inflection or vertex at $0$, then $I_f=6, V_f=12$.  This can be used to recover Theorem \ref{teo:princ} (1) and (2). For, if we have an irreducible singularity $f=0$, we first perturb the parametrisation to get an immersed curve $C$ with only ordinary double points and simple inflections and vertices, with none at the double points. The number of double points (i.e. $A_1$-singularities) is $\delta(f)=\frac{1}{2}\mu(f)$ (\cite{milnor}). From the calculations above, if we then perturb $C$ to get a smooth curve $C'$, then $C'$ has $I_{\gamma}+6\delta$ inflections and $V_{\gamma}+12\delta$ vertices; that is, $I_f=I_{\gamma}+3\mu(f)$ and $V_f=V_{\gamma}+6\mu(f)$ vertices.

(2) \textit{The Klein cubic}. 
Consider a nodal cubic with a node at $(0:0:1)$ and equation $x^3+y^3-xyz=0$. The Hessian is $3(x^3+y^3)+xyz$, and there are only three inflections, which are collinear. So the double point has absorbed $6$ inflections. We can see how by considering the family
$x^3+y^3+t^3z^3-xyz$, with inflections at the following points:
$$
\begin{array}{lll}
(0:-t:1),& (0:-\omega t:1),& (0:-\omega^2t:1), \\
(t:0:-1),& (t:0:-\omega), &(t:0:-\omega^2), \\
(-1:1:0),& (-\omega:1:0),& (-\omega^2:1:0). 
\end{array}
$$

The first two sets of points give the $6$ inflections emerging from $(0:0:1)$.
}
\end{exs}

Given a singular irreducible germ $f$, let $\lambda(f)$ denote the maximal contact between $C_f$ and circles through the origin, as in Theorem \ref{paramcase}.

\begin{theo}\label{teo:diastari}
{\rm (1)} If $C_f$ is a germ of a singular irreducible curve, then
$$
V_f = I_f+3 \mu(f)+\lambda (f).
$$

{\rm (2)} If $C_f$ is as in $(1)$, then 
$$
\begin{array}{rcccl}
\min\{2m, \beta(f)\}+m+3\mu(f)-3&\le& I_f&\le& \beta(f)+m+3\mu(f) -3,\\[0.2cm]
2\min\{2m,\beta(f)\}+6\mu(f)+m-2&\le& V_f&\le& 2\beta(f)+6\mu(f)+m-2.
\end{array}
$$
{\rm (3)} For a  ${\cal K}$-finite irreducible germ $f:(\C^2,0)\to (\C,0)$,  $I_f$ $($resp. $V_f)$ is constant on the ${\cal K}$-orbit of $f$ if and only if $f$ is singular and $\beta(f)<2m$.

{\rm (4)} More generally, given a ${\cal K}$-finite germ $f:(\C^2,0)\to (\C,0)$,   $I_f$ $($resp. $V_f)$  is constant on the ${\cal K}$-orbit of $f$ if and only if $f$ satisfies the hypotheses of \mbox{\rm Theorem \ref{theo:probs}(7)} and $\beta_1-M_1=\ldots=\beta_k-M_k$. 
\end{theo}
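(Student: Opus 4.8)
The plan is to reduce the general case to a direct computation using the decomposition formulas already established. Write $f=\prod_{j,i} f_{ij}$ as in Theorem \ref{theo:probs}(7), and let $\gamma_{ij}$ be a parametrisation of the branch $C_{f_{ij}}$. By Theorem \ref{teo:calc_fat_irred} (or Theorem \ref{teo:princ}(3)), $I_f$ is the sum of the $I_{f_{ij}}$ plus a multiple of the pairwise intersection numbers $m(f_{ij},f_{kl})$; the second summand is a ${\cal K}$-invariant of $f$ (it is determined by the embedded topological type, hence locally constant on the ${\cal K}$-orbit and in fact on the whole $\mu$-constant / topologically constant stratum containing $f$). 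So $I_f$ is constant on the ${\cal K}$-orbit if and only if each $I_{f_{ij}}$ is constant on the ${\cal K}$-orbit of the corresponding branch \emph{and} no jump in the intersection numbers can be compensated by a jump in some $I_{f_{ij}}$. The first step is therefore to invoke part (3): each $I_{f_{ij}}$ is ${\cal K}$-constant iff $f_{ij}$ is singular with $\beta(f_{ij})<2m_{ij}$, i.e. exactly the hypothesis $\beta(f_{ij})<m_{ij}$ assumed in Theorem \ref{theo:probs}(7) is the relevant regime (note $m_{ij}\le 2m_{ij}$ always, and for singular branches $\beta<m$ forces $\beta<2m$). One then has to be a little careful: the branches $f_{ij}$ need not \emph{a priori} satisfy $\beta(f_{ij})<m_{ij}$, so the "only if" direction must first show that if some branch is smooth, or is singular with $\beta(f_{ij})\ge 2m_{ij}$, or if the tangent-cone configuration is not as in (7), then $I_f$ jumps somewhere on the ${\cal K}$-orbit; this is where the smooth-component case ($I_f=\infty$ is not locally constant, or rather $I_f$ jumps between finite values in a family deforming the smooth branch into a curve with an inflection) and the $\beta\ge 2m$ case from (3) get used.

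Granting that all $f_{ij}$ are singular with $\beta(f_{ij})<m_{ij}$, the remaining point is to identify precisely when the combination of the branch contributions with the intersection-number contributions is ${\cal K}$-constant. Here I would use the formula from Theorem \ref{theo:probs}(7) giving ${\cal I}(f)=\{M,\,M-M_1+\beta_1,\dots,M-M_k+\beta_k\}$, together with Theorem \ref{paramcase} which expresses $I_\gamma$ and $V_\gamma$ for each branch in terms of its multiplicity and first Puiseux exponent ($I_{\gamma_{ij}}=m_{ij}+n_{ij}-3$ where $n_{ij}$ is the order of contact with the tangent line, and the osculating-circle order from (1)). The content is that $I_f$ viewed as a function on the ${\cal K}$-orbit can only vary through the "partition into tangent directions" data, and it is constant precisely when the quantities $M_j-\beta_j$ — equivalently $\beta_j-M_j$ — agree for all $j$, because then the set ${\cal I}(f)$ collapses (all the entries $M-M_j+\beta_j$ coincide), forcing the contact orders that feed into the $I_{\gamma_{ij}}$ and into $\lambda$ to be rigid. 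Concretely: a generic element of the ${\cal K}$-orbit can redistribute which tangent direction "wins" the probe of maximal contact, and $\beta_j-M_j$ being independent of $j$ is exactly the condition that this redistribution does not change ${\cal I}(f)$, hence (by Theorem \ref{paramcase} and the additivity formulas) does not change $I_f$ or $V_f$.

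For the vertex statement $V_f$, I would run the identical argument with the multiplier $12$ in place of $6$ and with $\lambda(f)$ entering via Theorem \ref{teo:diastari}(1) and Theorem \ref{paramcase}(3); since $V_\gamma=I_\gamma+\lambda(\gamma)-3$ and $\lambda$ is itself controlled by ${\cal I}(\gamma)$ except in the degenerate case $x^2+y^2=0$ (Remark \ref{rem:finiteIV}(1)), the ${\cal K}$-constancy of $V_f$ is governed by the same collapse of ${\cal I}(f)$, i.e. the same condition $\beta_1-M_1=\dots=\beta_k-M_k$. The main obstacle I anticipate is the bookkeeping in the "only if" direction: one must exhibit, for any $f$ violating the stated conditions, an explicit arbitrarily small deformation within the ${\cal K}$-orbit that changes $I_f$ (resp. $V_f$), and ruling out accidental cancellations between a jump in an intersection number and a jump in a branch invariant requires the monotonicity/semicontinuity built into Theorem \ref{paramcase} and the fact that the intersection-number sum is already a topological invariant, hence cannot jump \emph{within} the ${\cal K}$-orbit at all — so no cancellation is possible and each source of non-constancy acts independently. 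Making that independence argument airtight is the crux.
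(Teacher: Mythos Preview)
Your strategy diverges from the paper's and, more importantly, does not reach the stated conclusion. The paper does not decompose $I_f$ via Theorem \ref{teo:calc_fat_irred} at all; it argues directly (as in part (3)) that constancy of $I_f$ (resp.\ $V_f$) on the ${\cal K}$-orbit is equivalent to ${\cal I}(f)$ having at most two elements, and then reads off the condition $\beta_1-M_1=\cdots=\beta_k-M_k$ from the explicit description of ${\cal I}(f)$ in Theorem \ref{theo:probs}(7).

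Your branch-by-branch route runs into a genuine difficulty. You correctly observe that the intersection numbers $m(f_{ij},f_{kl})$ are ${\cal K}$-invariant, so $I_f$ is constant on the orbit iff $\sum_{i,j}I_{f_{ij}\circ\phi}$ is. But then your argument splits into two incompatible claims. On one hand you invoke part (3) branchwise: if every branch satisfies $\beta(f_{ij})<2m_{ij}$, then each $I_{f_{ij}\circ\phi}$ is individually constant in $\phi$, hence so is the sum --- with no further condition on the $\beta_j-M_j$. On the other hand you assert that the equality $\beta_1-M_1=\cdots=\beta_k-M_k$ is needed because otherwise ``a generic element of the ${\cal K}$-orbit can redistribute which tangent direction wins the probe of maximal contact''. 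These two claims cannot both be right: the first already yields constancy without the second. Your narrative about ${\cal I}(f)$ ``collapsing'' does not explain what quantity in the formula $I_f=\sum I_{f_{ij}}+6\sum m(f_{ij},f_{kl})$ is supposed to vary when the $\beta_j-M_j$ differ but every $\beta(f_{ij})<2m_{ij}$.

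The underlying issue is that the link between $I_f$ and ${\cal I}(f)$ in the reducible case is not mediated by the branch decomposition in the way you suggest; the paper's argument treats ${\cal I}(f)$ as the primary object and never passes through $\sum I_{f_{ij}}$. If you want to salvage the decomposition route you would need to explain precisely how the contact orders $n_{ij}^\phi$ for branches sharing a tangent direction are correlated (they are probed by the same curve $\phi(L_j^\phi)$), and why the sum $\sum_i n_{ij}^\phi$ can or cannot be varied independently of the analogous sums for other $j$ --- but that is essentially a recomputation of ${\cal I}(f)$, which is what Theorem \ref{theo:probs}(7) already does.
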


\begin{proof}
(1) This follows from Theorem \ref{teo:princ} and Theorem \ref{paramcase}.

(2) Parametrising $f=0$ as $(t^m,t^n+O(n+1)),$ with $n>m$, we have shown that $I_\gamma=m+n-3=I_f-3\mu(f)$. Clearly, $n$ is either a multiple of $m$ or $\beta(f)$. It follows that $\min\{2m,\beta(f)\}+m-3+3\mu(f) \le I_f$, and the other inequality is now clear. For the second part, from (1), we use the estimates for $\lambda(f)$ given in Theorem \ref{theo:probs}. 

(3) This follows from the fact that $I_\gamma$ is expressed in terms of $m$ and the order of contact, which must have a unique value. For given $N\in {\cal I}(f)$,  by a diffeomorphism we can find a line (resp, circle) whose order of contact with an ${\cal A}$-equivalent germ is of order $N$. The result now follows from Theorem \ref{theo:probs}. 

(4) Again, we need ${\cal I}(f)$ to have at most two values. This clearly implies that for each irreducible component $g$, we have $\beta(g)<2\, \mult(g)$, and the result follows from Theorem \ref{theo:probs}(7). 
\end{proof}

Generically, the values of $I_\gamma$ and $V_\gamma$, or $I_f$ and $V_f$, are minimal, so it is of interest to compute them. First, note that  by Theorem \ref{teo:princ}, if $\gamma$ and $f$ yield the same (irreducible) curve, then  $I_f=I_\gamma+3\mu(f)$ and $V_f=V_\gamma+6\mu(f)$. Thus, the value of $I_\gamma$ (resp. $V_\gamma$) is minimised if and only if  $I_f$ (resp. $V_f$) is minimised. 

For $\gamma:(\C,0)\to (\C^2,0)$ in the form $\gamma(t)=(t^m,t^n+O(n+1)),$ with $ n>m$, we have $I_\gamma=m+n-3$. Suppose then we have an ${\mathcal A}$-type $\gamma$. We know that $n\le \beta(\gamma)$, so the minimal value of $I_\gamma$ is $m+\min\{2m,\beta(\gamma)\}-3$, while the minimal value of $V_\gamma$ is $3m+n-6$ by Theorem \ref{paramcase}(4).

The following result shows that the minimal values attainable for a general germ depend only on its irreducible components.

\begin{theo}\label{theo:minvalsIfVf}
Let $f=f_1\ldots f_k$, where each $f_j$ is irreducible. Let $I_{f_j}^0$ $($resp. $V_{f_j}^0)$ denote the minimal value of $I_{f_j}$ $($resp. $V_{f_j})$, and let $I_{f}^0$ $($resp. $V_{f}^0)$ denote the minimal value of $I_f$ $($resp. $V_f)$. Then:
$$
\begin{array}{l}
I_f^0=\displaystyle{\sum_{j=1}^k I_{f_j}^0+6\sum_{1\le i<j\le k}m(f_i,f_j),} \\
V_f^0=\displaystyle{\sum_{j=1}^k V_{f_j}^0+12\sum_{1\le i<j\le k}m(f_i,f_j).}
\end{array}
$$
\end{theo}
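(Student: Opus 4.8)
The plan is to combine the additivity formula of Theorem~\ref{teo:calc_fat_irred} with an explicit construction that realises the minimum of every branch simultaneously. I would first record what happens for an arbitrary germ $g$ in the $\mathcal{K}$-orbit of $f$: its branches $g_1,\dots,g_k$ are $\mathcal{K}$-equivalent (after renumbering) to $f_1,\dots,f_k$; the Milnor numbers and the pairwise intersection numbers are $\mathcal{K}$-invariants, so $\mu(g_i)=\mu(f_i)$ and $m(g_i,g_j)=m(f_i,f_j)$; and by Theorem~\ref{teo:calc_fat_irred}, $I_g=\sum_i I_{g_i}+6\sum_{i<j}m(f_i,f_j)$ and $V_g=\sum_i V_{g_i}+12\sum_{i<j}m(f_i,f_j)$. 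Since $I_{g_i}\ge I_{f_i}^0$ and $V_{g_i}\ge V_{f_i}^0$ for every $i$, this already gives the inequalities ``$\ge$'' in both asserted identities. Hence everything reduces to producing a single $g$ in the $\mathcal{K}$-orbit of $f$ for which $I_{g_i}=I_{f_i}^0$ \emph{and} $V_{g_i}=V_{f_i}^0$ for all $i$ simultaneously; Theorem~\ref{teo:calc_fat_irred} applied to that $g$ then supplies the reverse inequalities and finishes the proof.

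Next I would make explicit what ``minimal'' means for one irreducible germ $h$ of multiplicity $m$. Writing $h$ in $t^m$-standard form, $\gamma(t)=(t^m,\ell t^m+\tilde y(t))$ with tangent line $\{y=\ell x\}$ and $\ord(\tilde y)>m$, Theorem~\ref{teo:princ}(1),(2) give $I_h=I_\gamma+3\mu(h)$ and $V_h=V_\gamma+6\mu(h)$, Theorem~\ref{paramcase}(2) gives $I_\gamma=m+\ord(\tilde y)-3$, and Theorem~\ref{paramcase}(4) expresses $V_\gamma$ through $m$, $\ord(\tilde y)$ and (when $\ord(\tilde y)=2m$) one further order. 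Comparing with the values of $\min I_\gamma$ and $\min V_\gamma$ recalled just before the statement of the theorem, a short inspection shows that both $I_\gamma$ and $V_\gamma$ attain their minima over the $\mathcal{K}$-orbit of $h$ as soon as the coefficient of $t^{2m}$ in $\tilde y$ is non-zero — this is needed only if $\beta(h)>2m$ — and the coefficient of $t^{3m}$ in $\tilde y$ is non-zero — needed only if $\beta(h)>3m$; all further coefficients of $\tilde y$ are then irrelevant. The point I would emphasise is that these conditions ask only for the \emph{presence} of the monomials $t^{2m},t^{3m}$ in the deviation of $C_h$ from its tangent line, and never require the \emph{vanishing} of any coefficient.

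For the construction: after a linear change of coordinates of $\C^2$ I may assume that no branch of $f$ is tangent to the $y$-axis, and, reparametrising the source of each branch, I may take $\gamma_i(t)=(t^{m_i},y_i(t))$ for every $i$. I would then use the diffeomorphism $\Phi(x,y)=(x,\,y+c_2x^2+c_3x^3)$ and set $g:=f\circ\Phi^{-1}$. Since $d\Phi(0)=\mathrm{id}$, $\Phi$ fixes the tangent line and the multiplicity of every branch, and it adds exactly $c_2t^{2m_i}+c_3t^{3m_i}$ to the deviation-from-tangent part of the $i$-th branch. Choosing $(c_2,c_3)\in\C^2$ outside the finitely many lines on which some branch fails the non-vanishing conditions of the previous paragraph, we find by that paragraph that each branch $g_i$ of $g$ has $I_{\gamma_i}$ and $V_{\gamma_i}$ minimal, i.e.\ $I_{g_i}=I_{f_i}^0$ and $V_{g_i}=V_{f_i}^0$. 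As $g$ is right-equivalent, hence $\mathcal{K}$-equivalent, to $f$ and $m(g_i,g_j)=m(f_i,f_j)$, Theorem~\ref{teo:calc_fat_irred} applied to $g$ yields the two required ``$\le$'' inequalities.

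I expect the main obstacle to be precisely this simultaneity, and in particular the $V$-part. For a single branch with $\beta(h)>3m$ the obvious way to minimise $V_\gamma$ is to \emph{delete} the $t^{2m}$ term, parametrising as $(t^m,t^{3m}+\cdots)$; but two mutually tangent branches of the same multiplicity can have different $t^{2m}$-coefficients — a non-vanishing difference that is an invariant of the pair (up to scale) — so no single coordinate change can delete the $t^{2m}$ term of both at once, and the obvious approach cannot be carried out simultaneously. The way around this, built into the construction above, is the observation that it is equally good to \emph{keep} the $t^{2m}$ term and only ensure a non-zero $t^{3m}$ term: the parametrisation $(t^m,\,t^{2m}+\cdots+t^{3m}+\cdots)$ also realises $\min V_\gamma$, and one summand $c_3x^3$ creates such a $t^{3m}$ monomial in every branch simultaneously. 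Once this is recognised, what remains is the routine case analysis (on $\beta(h)$ relative to $2m$ and $3m$) behind the claim of the second paragraph, and checking that a generic choice of $(c_2,c_3)$ suffices.
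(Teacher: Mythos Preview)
Your overall plan---establish ``$\ge$'' from Theorem~\ref{teo:calc_fat_irred} plus $\mathcal{K}$-invariance of the $m(f_i,f_j)$, then exhibit one $g$ realising all branch minima simultaneously---is sound, and your final paragraph shows good awareness of the real difficulty. However, the key claim in your second paragraph is not correct as stated. You write $\gamma(t)=(t^m,\ell t^m+\tilde y(t))$ with tangent line $y=\ell x$ and then invoke Theorem~\ref{paramcase}(4) to analyse $V_\gamma$ in terms of the coefficients of $\tilde y$. But Theorem~\ref{paramcase}(4) is proved only for the form $(t^m,y(t))$ with $\ord(y)>m$, i.e.\ tangent line $y=0$; since $V_\gamma$ is only a similarity invariant (not affine), the shear killing $\ell$ is not allowed, and one must first rotate and reparametrise. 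After that, the condition for minimality of $V_\gamma$ is \emph{not} simply ``$b_{2m}\ne0$ and $b_{3m}\ne0$''. Concretely, take $m=2$, $\ell=1$, $\beta=7$ and $\gamma(t)=(t^2,\;t^2+b_4t^4+b_6t^6+t^7)$: a direct computation gives
\[
v_\gamma(t)=768\,(b_6-b_4^2)\,t^6+1680\,t^7+\cdots,
\]
so $V_\gamma=6=6m-6$ is minimal iff $b_6\ne b_4^2$, and the choice $b_4=b_6=1$ (both nonzero) gives $V_\gamma\ge7$. Thus your sufficient condition fails, and with it the assertion that ``a generic choice of $(c_2,c_3)$ suffices'' is left unsupported: your $\Phi_{c_2,c_3}$ sends $(b_4,b_6)\mapsto(b_4+c_2,\,b_6+c_3)$, and you now need the \emph{correct} locus $\{b_6=b_4^2\}$ (and its analogues for each branch with $\ell_i\ne0$) not to contain the whole family.

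This is repairable---after rotation and reparametrisation the relevant coefficient of $s^{3m_i}$ in the standardised $\hat y_i$ is a polynomial in $(c_2,c_3)$ which is visibly nonconstant in $c_3$, so generic $(c_2,c_3)$ still works---but the argument is no longer the ``short inspection'' you promise, and the case analysis has to be redone branch by branch with the rotated coordinates. The paper sidesteps all of this: instead of a two-parameter family it works in the full space $A^N$ of polynomial diffeomorphism-germs of degree $\le N$, observes (via finite determinacy, Theorem~\ref{theo:f=gh}(4)) that for each branch the set of $\phi$ realising the minimum is a nonempty Zariski-open subset of the irreducible variety $A^N$, and then simply intersects these finitely many open sets. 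That argument needs no coefficient computations at all and handles the different tangent directions uniformly; the price is that it is nonconstructive, whereas your approach, once corrected, actually names an explicit minimiser.
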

\begin{proof}
We start by considering the set $A^N$ of polynomial maps of degree $\le N$ which are germs of diffeomorphisms $\phi:(\C^2,0)\to (\C^2,0)$; this is the complement of a proper subvariety of a vector space. Given an irreducible curve $\gamma:(\C,0)\to (\C^2,0)$, we can compute the integers $I_{\phi\circ \gamma}$ for each $\phi\in A^N$. Clearly, in this way we obtain the $N$-jet of any germ ${\mathcal C}$-equivalent to $\gamma$. By Theorem \ref{theo:f=gh}, for some $N$ we obtain the minimum value for some $\phi$. Moreover, the set of such $\phi$ is also the complement of a proper algebraic subvariety in $A^N$. If $f$ is the defining equation of $(\gamma(\C),0)$, then $I_{f\circ \phi}$ is minimal. In the general case, we can choose a $\phi$ with $I_{f_i\circ \phi}$ minimal for each $i=1,\ldots, k$. It follows that $I_{f\circ \phi}$ is minimal. The result follows similarly for the minimal value of $V_f$.
\end{proof}


\section{$I_f$ and $V_f$ for simple singularities} \label{sec:exam}

Using the results from the previous sections, it is not difficult to compute the range of values that $I_f$ and $V_f$ can take when $f$ has a simple singularity.
We obtain the following result, omitting, for simplicity, the cases where an irreducible  component of $f$ has a parametrisation of the form $\gamma(t)=(t^m,t^n+O(n+1)),$ with $n=2m$, when computing $V_f$ (see Theorem  \ref{paramcase}(4)).

\begin{theo}\label{theo:Simple_f}
Let $f:(\C^2,0)\to (\C,0)$ be a germ of a holomorphic curve with a simple singularity. Then  the possible values of $I_f$ and $V_f$ for each $\mathcal K$-type of the singularity of $f$ are given in \mbox{\rm Table \ref{tab:IfVfSimpleSing}}, where we adopt the simplifying assumption mentioned above regarding the values of $V_f$  for the $A_{2k}$ and $D_{2k+1}$-singularities. 
\end{theo}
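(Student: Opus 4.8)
The plan is to reduce the computation to the irreducible branches of $f$ and the intersection numbers between them, using Theorem~\ref{teo:princ}(3) (equivalently Theorem~\ref{teo:calc_fat_irred}), and to exploit that $\mu(f)$, the number $n$ of branches, and the multiset $\{m(f_i,f_j)\}$ are all constant on the ${\cal K}$-orbit (by properties~2 and~5 of the intersection number, and the ${\cal R}$-invariance of $\mu$). I would first fix, for each ${\cal K}$-type, a convenient factorisation into irreducible branches and read off the relevant data: $A_{2k}$, $E_6=x^3+y^4$ and $E_8=x^3+y^5$ are irreducible; $A_{2k+1}=(x+iy^{k+1})(x-iy^{k+1})$ is a union of two smooth branches of mutual contact $k+1$ (two transverse lines when $k=0$); $D_k=y(x^2+y^{k-2})$ is a smooth branch together with a branch of type $A_{k-3}$ transverse to it --- the latter being irreducible singular when $k$ is odd and a pair of smooth branches (tangent to $x=0$) when $k$ is even; and $E_7=x(x^2+y^3)$ is a smooth branch together with a cusp tangent to it. For each branch I would compute $m=\mult$, $\mu$ and the first Puiseux exponent $\beta$, and compute each $m(f_i,f_j)$ directly from Proposition~\ref{prop:multOrderParm}; for the reducible types I would recover $\mu(f)$ from $\mu(f)=\sum_i\mu(f_i)+2\sum_{i<j}m(f_i,f_j)-n+1$.

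For an irreducible singular branch $f_i$ of multiplicity $m$, Theorems~\ref{teo:princ} and \ref{paramcase} give $I_{f_i}=m+n-3+3\mu(f_i)$ and, when $n\neq 2m$, $V_{f_i}=3m+n-6+6\mu(f_i)$, where $n$ is the contact of $C_{f_i}$ with its tangent line. Theorem~\ref{theo:probs}(3) identifies the admissible values of $n$ over the ${\cal K}$-orbit as $\{jm:j\ge 2,\ jm<\beta\}\cup\{\beta\}$, and each such value is genuinely realised --- for instance the curve $(y-x^{j})^2+x^{2k+1}=0$ lies in the ${\cal K}$-orbit of $A_{2k}$ and has tangent-line contact $2j$, while the normal form attains $\beta=2k+1$; for $E_6$ and $E_8$ the admissible $n$ is unique, so by Theorem~\ref{teo:diastari}(3) these $I_{f_i}, V_{f_i}$ are constant on the orbit. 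Substituting the finitely many admissible $n$ into the formulas gives the ranges for these branches, and for the exceptional values $n=2m$, which occur for the core of $A_{2k}$ ($k\ge 2$) and the $A_{k-3}$-branch of $D_k$ with $k$ odd, I omit the corresponding $V_f$ entries, as the statement of Theorem~\ref{theo:Simple_f} permits (this is the ``simplifying assumption'' for $A_{2k}$ and $D_{2k+1}$).

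For a smooth branch $g$ of $f$ with tangent line $L$ one has, since $\mu(g)=0$, $I_g=I_{\gamma_g}=(\text{contact of }C_g\text{ with }L)-2$ and $V_g=V_{\gamma_g}=I_g+\lambda(\gamma_g)-3$, and both the tangent-line contact and $\lambda(\gamma_g)$ may be prescribed to be any sufficiently large integer, with the value $\infty$ occurring precisely when $C_g$ is a line (Theorem~\ref{theo:f=gh}(7)). When $g$ is transverse to the rest of $C_f$, as for the smooth component of every $D_k$, this contact is a free parameter independent of the other branch data, so by Theorem~\ref{teo:princ}(3) the invariants $I_f$ and $V_f$ then run through every integer from a computable threshold upward, together with $\infty$; when $g$ is tangent to another branch --- the two smooth branches of $A_{2k+1}$, or the smooth branch and the cusp in $E_7$ --- the requirement that $C_f$ remain in its ${\cal K}$-type couples the branch contacts through the fixed mutual intersection number, yielding a more intricate (possibly non-contiguous) range that I would pin down by elementary order-of-vanishing analysis of the relevant Puiseux parametrisations. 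Assembling all of this branch by branch via Theorem~\ref{teo:princ}(3) produces Table~\ref{tab:IfVfSimpleSing}. I expect the only genuine difficulty to be bookkeeping: correctly enumerating the admissible tangent-line exponents for every branch in the families $A_{2k}$, $A_{2k+1}$, $D_k$; tracking the parity-dependent branch structure of $D_k$ and the small exceptional cases $A_1$ and $D_4$; deciding exactly which combinations of branch contributions are simultaneously realisable within one ${\cal K}$-orbit (particularly for the tangent pairs of branches in $A_{2k+1}$ and $E_7$); and applying the $n=2m$ exclusions for $V_f$ consistently.
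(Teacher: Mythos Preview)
Your approach is the paper's own: the paper gives no detailed argument for Theorem~\ref{theo:Simple_f}, stating only that the table follows from the results of the previous sections, and your plan is a faithful elaboration via Theorems~\ref{theo:probs}, \ref{paramcase}, \ref{teo:calc_fat_irred} and~\ref{teo:princ}. Your caution about the coupled tangent-branch cases is well placed --- indeed, for $A_{2k+1}$ with $k\ge 2$ the order-of-vanishing analysis you propose shows that $I_{\gamma_1}+I_{\gamma_2}$ cannot equal $1$ (writing the two branches as $y=\psi_i(x)$ with $\ord(\psi_1-\psi_2)=k+1$, the combination $\ord\psi_1=2,\ \ord\psi_2=3$ forces $\ord(\psi_1-\psi_2)=2$), so not every integer between $6k+6$ and $8k+4$ is attained; the $A_{2k+1}$ row of Table~\ref{tab:IfVfSimpleSing} is therefore best read as recording the endpoints of the range rather than a complete list of values.
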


\begin{table}[h!]
\begin{center}
	\caption{$I_f$ and $V_f$ for simple singularities of $f$ $(p,q\in \mathbb N)$.}
	\begin{tabular}{|c|c|c|}
		\hline
		Singularity of $f$ & $I_f$ & $V_f$  \\ 
		\hline
$A_{2k}$, $k \geq 1$ & $I_f=6k+2j-1$, $2 \leq j \leq k$,  & $V_f=12k+2j$, $2 \leq j \leq k$, \\
		& or $I_f=8k$ & or $V_f = 14k+1$\\
		\hline
$A_{2k+1}$, $k \geq 0$ & $I_f=p$, $6k+6 \leq p \leq \infty$ & $V_f=q$, $12k+12 \leq q\leq \infty$ \\ 
		\hline
$D_{2k+1}$, $k \geq 3$ &$I_f=p$,  $6k+9 \leq p \leq \infty$ &$V_f=q$,  $12k+16 \leq q \leq \infty$\\ \hline
$D_{2k}$, $k \geq 2$ &$I_f=p$,  $6k+6 \leq p \leq \infty$ &$V_f=q$,  $ 12k+12\leq q \leq \infty$\\ \hline
$E_6$ & $22$ & $43$\\ \hline
$E_7$ & $I_f=p$, $26 \leq p \leq \infty$ & $V_f=q$, $51 \leq q \leq \infty$ \\ \hline
$E_8$ & $29$ & $56$\\
		\hline
	\end{tabular}\label{tab:IfVfSimpleSing}
\end{center}

\end{table}

We note that, due to the form of $i_f$, we can compute $I_f$ for some germs directly from the formula 
$$I_f=\dim\frac{ {\cal O}_2}{\langle f,i_f\rangle}.$$ 

Suppose that the germ $f:(\C^2,0)\to (\C,0)$ is semi-quasihomogeneous, with quasi-homogeneous part $g$. Thus, for some coprime positive integer weights respectively $w_1$ and $w_2$ assigned to  $x$ and $y$, respectively, the polynomial $g$ has degree $d$, and all the terms of $f-g$ have weight $>d$.  
We assume that neither weight equals $1$, that is $g$ is not homogeneous, and that neither $x$ nor $y$ divides $g$. 
With the same weights, $i_f$ is also semi-weighted homogeneous, of degree $3d-2w_1-2w_2$. Of course, this does not calculate all possible values of $I_f$ within the $\mathcal K$-orbit of $f$.

\begin{prop}
Under the assumptions above, 
$$
I_f=\frac{d(3d-2w_1-2w_2)}{w_1w_2},
$$
and this is the maximum value that can occur for any germ ${\cal K}$-equivalent to $f$.
\end{prop}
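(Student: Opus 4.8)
The plan is to exploit the semi-quasihomogeneous structure so that both $f$ and $i_f$ are controlled by their leading (quasi-homogeneous) parts, and then invoke a Bézout-type dimension count. First I would record that, with weights $w_1,w_2$ assigned to $x,y$, a quasi-homogeneous polynomial $g$ of weighted degree $d$ has the property that $i_g$ is again quasi-homogeneous of weighted degree $3d-2w_1-2w_2$ (this is a direct weight count: each of $f_y^2 f_{xx}$, $f_x f_y f_{xy}$, $f_x^2 f_{yy}$ carries weight $2(d-w_2)+(d-2w_1)=3d-2w_1-2w_2$, and similarly for the others). Since $f=g+(\text{higher weight terms})$ and the singularity is isolated, $i_f=i_g+(\text{higher weight terms})$, so $f$ and $i_f$ form a semi-quasihomogeneous pair with quasi-homogeneous parts $g$ and $i_g$.

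Next I would argue that $g$ and $i_g$ form a \emph{regular sequence} in $\mathcal{O}_2$, equivalently that the weighted-homogeneous system $g=i_g=0$ has only the origin as a solution in $\C^2$. This is where the hypotheses "$x\nmid g$, $y\nmid g$, and neither weight is $1$" enter: a quasi-homogeneous curve $\{g=0\}$ in $\C^2$ is a union of lines through the origin of the form $\{x=0\}$, $\{y=0\}$, and (if $w_1=w_2$, which is excluded since the weights are coprime and not both $1$ — here one should be slightly careful, as coprimality with $w_1=w_2$ forces $w_1=w_2=1$) a finite set of other lines, but in the genuinely quasi-homogeneous case the only candidate components through a generic point are smooth branches; along any such smooth branch $\{g=0\}$, $i_g$ vanishes identically only if that branch is a line, which is ruled out by $x\nmid g$, $y\nmid g$. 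Hence $\{g=0\}\cap\{i_g=0\}=\{0\}$, the sequence is regular, and by the standard fact about semi-quasihomogeneous complete intersections, $\dim_\C \mathcal{O}_2/\langle f,i_f\rangle = \dim_\C \mathcal{O}_2/\langle g,i_g\rangle$. The latter, for a weighted-homogeneous complete intersection with generators of weighted degrees $d$ and $3d-2w_1-2w_2$, equals $\dfrac{d\,(3d-2w_1-2w_2)}{w_1 w_2}$ by the weighted Bézout formula. This gives the displayed value of $I_f$.

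For the maximality claim, I would use Theorem~\ref{teo:calc_fat_irred} together with the description of $I_f$ for irreducible germs. By Theorem~\ref{teo:princ}(1), for an irreducible germ $I_f=I_\gamma+3\mu(f)=m+n-3+3\mu(f)$ where $n\in\{\text{multiple of }m\}\cup\{\beta(f)\}$ is the order of contact of $C_f$ with its tangent line; this is maximised precisely when $n=\beta(f)$, which occurs on the ${\cal K}$-orbit exactly when the generic (osculating) line is tangent to the tangent cone — and for a semi-quasihomogeneous $f$ with the stated hypotheses the leading form already realises the maximal contact, so the value computed above is the top of the range. For reducible $f$ one combines the per-component maxima via Theorem~\ref{teo:calc_fat_irred}, noting that the intersection numbers $m(f_i,f_j)$ are ${\cal K}$-invariant (in fact topological), so only the $I_{f_i}$ terms can grow, and each is bounded above by its semi-quasihomogeneous value.

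The main obstacle I expect is the regular-sequence / "no shared component" step: one must verify carefully that $\{g=0\}$ and $\{i_g=0\}$ meet only at the origin, which requires ruling out the degenerate possibilities (a linear component of $\{g=0\}$, handled by $x\nmid g$, $y\nmid g$; and the homogeneous case where $i_g$ could vanish on a whole line, handled by assuming neither weight is $1$). Once regularity is established, the dimension count and the invariance of the dimension under semi-quasihomogeneous perturbation are standard, and the maximality follows from the already-proven structure theorems, so the real content is isolating exactly why the stated hypotheses force the complete-intersection property.
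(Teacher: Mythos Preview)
Your proposal is correct and follows the same route as the paper: verify that $(f,i_f)$ is semi-quasihomogeneous with finite quasi-homogeneous part $(g,i_g)$ (finiteness because $g$ can have no line component, the only quasi-homogeneous lines for distinct coprime weights being the axes, which are excluded), apply the weighted B\'ezout formula, and then deduce maximality branch by branch via Theorem~\ref{teo:calc_fat_irred}. The paper makes the maximality step more explicit by parametrizing each irreducible factor of $g$ as $t\mapsto(\alpha t^{w_1},\beta t^{w_2})$, so that $I_{\gamma}=w_1+w_2-3$ visibly attains the Puiseux-exponent upper bound, and noting $m(g_i,g_j)=w_1w_2$; your exposition of the no-line-component step is a bit tangled (the non-axis components of $\{g=0\}$ are cuspidal, not lines), but it lands on the right argument.
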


\begin{proof}
The map $(x,y)\mapsto (t^{w_1}x,t^{w_2}y)$ preserves $g=0$ and its components. Since neither $x$ nor $y$ divides $g$, it follows that $g$ has no line components. Hence, $I_g$ is finite. Therefore,  $(f,i_f)$ is a semi-quasihomogeneous mapping with finite quasi-homogeneous part $(g,i_g)$, where the degree of the first component is $d$, and the degree of the second is $3d-2w_1-2w_2$. It follows by 
the generalised Bezout formula (see \cite[p. 200]{Arnold}) that 
$$
m(f,i_f)=m(g,i_g)=\frac{d(3d-2w_1-2w_2)}{w_1w_2}.
$$

For the second part, since $g$ is quasi-homogeneous, so is any irreducible factor $g'$. It admits a parametrisation of the form $\gamma(t)=(\alpha t^{w_1},\beta t^{w_2})$ for some $\alpha, \beta,$ with  $\alpha\beta\ne 0$. Assume that $w_1<w_2$; then, as noted above, $I_g=w_1+w_2-3$, which is the maximum possible value for $I_{g'}$. 
The result then follows from Theorem \ref{teo:calc_fat_irred}, since if $g=g_1\ldots g_n$, then the  intersection multiplicities $m(g_i,g_j)=w_1w_2$,  and each $I_{g_i}$ is maximal.
\end{proof}

\begin{ex}
{\rm
(1) Let $f$ be of type $A_{2k}$. By applying a similarity, we may assume  that the lowest order term of $f$ is $x^2$. Assigning weights ${\rm wt}(x)=k+1$ and $ {\rm wt}(y)=2$, suppose that $f$ is  semi-quasihomogeneous with  quasi-homogeneous part $x^2+cy^{2k+1}$, with $c\ne 0$. Clearly, $i_f=8k$. Note that, from Table \ref{tab:IfVfSimpleSing}, this is the largest value possible for an $A_{2k}$-singularity.

(2) More challenging examples are provided by Arnold's extensive lists  (see \cite{Arnold}). For example, for the $W_{11}$-singularity  given by $f(x,y)= x^4+y^5+ax^2y^3$, we have weights $w_1=5, w_2=4$ and degree $d=20$, yielding $I_f=42$. 

For the $J_{k,0}$-singularities,
$$
f(x,y)=x^3+bx^2y^k+y^{3k}+(c_0+\ldots+c_{k-3}y^{k-3})xy^{2k+1}, \,\, 4b^3+27\ne 0, \, \, k\ge 3;
$$ 
we have $w_1=k, w_2=1, d=3k$, and thus $I_f=3(7k-2)$.
}
\end{ex}


\section{Vertices and inflections of curves in $\R^2$} \label{sec:vertR}

Given a real analytic curve $f(x,y)=0$ in the Euclidean plane  $\R^2$, we can consider its complexification 
$f_{c}(x,y)=0$ in $\C^2$,  and define $I_f$ (resp. $V_f$) as $I_{f_{c}}$ (resp. $V_{f_{c}}$). 
Then $I_f$ (resp. $V_f$) provides an upper bound on the number of inflections (resp. vertices) concentrated at the singularity. 
Denote by ${\mathscr R}I_f$ (resp. ${\mathscr R}V_f$ )  the maximum number of real inflections (resp. vertices) concentrated at the singular point that can appear when deforming the curve. Thus, 
${\mathscr R}I_f\le I_f$ and ${\mathscr R}V_f\le V_f$. If $f$ is merely smooth, then provided $(f,i_f)$ (resp. $(f,v_f)$) is a finite mapping, we still obtain a well-defined upper bound, and these are finite maps for all $f$ off a set of infinite codimension.

It is shown in \cite{DiattaGiblin,MarcoSamuel} that for an $A_1^+$-singularity (i.e., $f\sim_\mathcal R \pm (x^2+y^2)$), ${\mathscr R}I_f=0$ and ${\mathscr R}V_f=4$, and  
for an $A_1^-$-singularity (i.e., $f\sim_\mathcal R \pm (x^2-y^2)$), ${\mathscr R}I_f=2$ and ${\mathscr R}V_f=6$. 
Since we have $I_{f_c}=6$ and $V_{f_c}=12$ for an $A_1$-singularity (following the arguments 
in Example \ref{Ex:A1}), this implies that for each double point, we need to remove at least 4 inflections and at least 6 vertices. 
This suggests that for an irreducible germ $f$,   
${\mathscr R}I_f\le I_f-4\delta= I_f-2\mu$ and  ${\mathscr R}V_f\le V_f-6\delta=V_f-3\mu$.

There is a well-defined notion of the degree of a real map germ $F:(\R^2,0)\to (\R^2,0)$, which is given by the topological degree of the mapping $F/||F||: S^{\epsilon}\to S^1$, where $S^1$ is the oriented unit circle, and  $S^{\epsilon}$  is the positively oriented circle of radius $\epsilon$, both oriented consistently with $\mathbb R^2$. If $F$ is differentiable, the degree can be computed as the sum of the signs of the Jacobian determinant of $F$ at all preimages of a regular value sufficiently close to the origin. For an explicit formula, see \cite{Eisenbud}.

If we consider the map-germ $F=(f,i_f)$, then an ordinary inflection has degree (or index) $\pm 1$. 
More precisely, if we choose a local frame given by the tangent and normal vectors to the curve, the is index $+1$ (resp. $-1$) if the curve near the inflection lies in the first and third (respectively, second and fourth) quadrants; see Figure~\ref{fig:infIndVert}.

For vertices, considering the map-germ $F=(f,v_f)$, an ordinary vertex also has index $\pm 1$. 
In \cite{SalariTari}, the notions of inward and outward vertices are defined based on the relative position of the evolute; see Figure~\ref{fig:infIndVert}. A vertex is inward if $\kappa\kappa''>0$, and outward otherwise. 
If we choose a frame given by the tangent and normal vectors to the curve, then the index is $+1$ at an inward vertex and $-1$ at an outward vertex when $\kappa(0)>0$; the signs are reversed when $\kappa(0)<0$.

For a singular germ, the degree of $(f,i_f)$ (resp. $(f,v_f)$) gives the sum of the indices of the ordinary inflections (resp. vertices) that appear in a deformation of $f$.
Of course the degree does not give information about the number of inflections or vertices, only their net contribution via their index.

Using the terminology of Tougeron (see \cite{wall}), we have the following result, whose proof is omitted.

\begin{prop}\label{theo:GenericIV}
The set of smooth germs $\gamma:(\K,0)\to (\K^2,0)$, respectively   $f:(\K^2,0)\to(\K,0)$, for which $I_\gamma$ or $V_\gamma$, respectively  $I_f$ or $ V_f$, is not finite, is of infinite codimension.
\end{prop}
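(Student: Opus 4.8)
The plan is to make the bad locus completely explicit by means of Theorem~\ref{theo:f=gh}(7), and then estimate its codimension jet by jet. Recall that a subset $W$ of a space of germs has infinite codimension in the sense of Tougeron if for every $N$ there is a $k$ such that the set $j^kW$ of $k$-jets of elements of $W$ is contained in a closed algebraic subset of the jet space $J^k$ of codimension at least $N$. By Theorem~\ref{theo:f=gh}(7), $I_f$ (resp. $I_\gamma$) fails to be finite exactly when $C_f$ (resp. $C_\gamma$) contains the germ of a line, and $V_f$ (resp. $V_\gamma$) fails to be finite exactly when $C_f$ (resp. $C_\gamma$) contains the germ of a line or of a complex circle. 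A component of $C_f$ realising such infinite contact is, near the origin, literally that line or complex circle; since a line through the origin is the zero set of a linear form $\ell=ax+by$ and a complex circle through the origin the zero set of a (reduced) quadratic $Q=A(x^2+y^2)+2Bx+2Cy$, the Nullstellensatz shows that the bad locus for $I_f$ is contained in $\bigcup_{[\ell]}\ell\,\mathcal{O}_2$ and the bad locus for $V_f$ in $\bigl(\bigcup_{[\ell]}\ell\,\mathcal{O}_2\bigr)\cup\bigl(\bigcup_{[Q]}Q\,\mathcal{O}_2\bigr)$, the unions running over the one-parameter family of lines and the two-parameter family of complex circles through the origin.

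Next I would carry out the jet-level count in the implicit case. The jet space $J^k$ of germs $(\C^2,0)\to(\C,0)$ has dimension $\binom{k+2}{2}-1$. For a fixed linear form $\ell\neq 0$, the image of $\ell\,\mathcal{O}_2$ in $J^k$ is the linear subspace $\{\ell h:\deg h\le k-1\}$, of dimension $\binom{k+1}{2}$ and hence of codimension $k$; letting $[\ell]$ vary, the union has codimension at least $k-1$. For a fixed complex circle $Q$ (degree at most $2$) the image of $Q\,\mathcal{O}_2$ in $J^k$ has dimension at most $\binom{k}{2}$, hence codimension at least $2k$; letting $[Q]$ vary, the union has codimension at least $2k-2$. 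Taking the union of the two loci, the $k$-jets of the bad locus for $V_f$ lie in an algebraic subvariety of $J^k$ of codimension at least $k-1$. In both cases the codimension tends to infinity with $k$, which is what is asserted.

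For the parametric case, $\gamma$ is a mono-germ, so $C_\gamma$ is irreducible and "contains a line (resp. complex circle) germ" forces $C_\gamma$ to equal that germ; equivalently the image of $\gamma$ lies in a fixed smooth analytic curve-germ $X$ which is a line or a complex circle. Fixing such an $X$ together with a parametrising immersion $\psi_X:(\C,0)\to(\C^2,0)$, every $\gamma$ with image in $X$ factors as $\gamma=\psi_X\circ h$ for some $h\in\mathcal{M}_1$, because $X$, being smooth, is its own normalisation. Since $\psi_X$ is an immersion, the assignment $j^kh\mapsto j^k(\psi_X\circ h)$ is injective (compose with a linear projection making $\psi_X$ a local diffeomorphism onto its image in a coordinate axis, and cancel a unit in $\mathcal{O}_1$), so the set of $k$-jets of germs with image in $X$ has dimension $k$ inside the $2k$-dimensional jet space of germs $(\C,0)\to(\C^2,0)$, i.e. codimension $k$. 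Letting $X$ run over the at most two-parameter family of lines and complex circles, the $k$-jets of the bad locus lie in an algebraic subvariety of codimension at least $k-2$, which again tends to infinity. The real case is identical: the relevant bad loci are cut out by real-algebraic conditions (divisibility of the complexification by a real linear form, or by a real quadratic of circle type, the non-real factors occurring in conjugate pairs whose product is such a real form), so the same dimension counts apply verbatim.

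The step I expect to be the main obstacle is the bookkeeping in the passage from germs to jets for the union over a positive-dimensional parameter space: one must verify both that "$C_f$ (resp. $C_\gamma$) contains a line or complex circle" is confined, at the level of $k$-jets, to a genuine algebraic subvariety of $J^k$ — which holds because divisibility by a form of order at most two, and the factorisation $\gamma=\psi_X\circ h$, truncate compatibly with taking $k$-jets — and that the fibered dimension estimate $\dim(\text{total})\le\dim(\text{parameter})+\dim(\text{fibre})$ is legitimate here, which it is since each fibre is either a linear subspace (implicit case) or the injective image of a jet space (parametric case) of the stated dimension. Granting this, the codimensions grow linearly in $k$ and the proposition follows.
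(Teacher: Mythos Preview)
The paper omits the proof of this proposition entirely, so there is nothing to compare your approach against; I will simply assess the proposal on its own terms.

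Your overall strategy---identify the bad locus via Theorem~\ref{theo:f=gh}(7), then count codimensions jet by jet---is sound, and your dimension counts are correct. There is, however, a genuine gap in the first step. The proposition concerns \emph{smooth} germs, whereas Theorem~\ref{theo:f=gh}(7) is proved for holomorphic germs and (via the paper's standing assumption) only for ${\cal K}$-finite ones. Both hypotheses matter. For a smooth germ $f$ the equivalence ``$I_f=\infty \Leftrightarrow C_f$ contains a line'' fails outright: take $f=(y-x^2)^2$, which is analytic but not ${\cal K}$-finite; one computes $i_f=-16(y-x^2)^3$, so $I_f=\infty$, yet $f$ has no linear factor and $j^4f=(y-x^2)^2$ does not lie in your set $\bigcup_{[\ell]}\ell\cdot J^3$. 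Likewise, a smooth germ with flat tail, say $f=y+\text{(flat)}$, has $I_f=\infty$ without its zero set literally being a line.

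The repair is not difficult but must be made explicit. First split off the non-${\cal K}$-finite germs: these already form a set of infinite codimension in Tougeron's sense (this is the standard fact you are implicitly assuming). Second, for a ${\cal K}$-finite smooth germ $f$, show that $I_f=\infty$ forces $I_{\hat f}=\infty$ for the formal Taylor series $\hat f$: indeed $I_f=\infty$ means ${\cal M}^N\not\subset\langle f,i_f\rangle$ for every $N$, and by Nakayama in ${\cal E}_2$ this is equivalent to ${\cal M}^N\not\subset\langle f,i_f\rangle+{\cal M}^{N+1}$, a condition depending only on $j^{N+2}f$ and hence shared by $\hat f$. Now $\hat f$ is ${\cal K}$-finite, so the argument behind Theorem~\ref{theo:f=gh}(7) (via the parametrisations of its irreducible factors, all of which carry over formally) yields a linear factor of $\hat f$, and hence of every $j^kf$. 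From this point your jet-level codimension estimates go through verbatim. The parametric case needs the same detour through $\hat\gamma$.
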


\begin{figure}[tp]
\begin{center}
\includegraphics[scale=2]{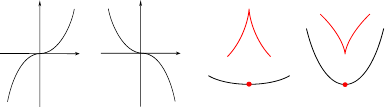}
\caption{The degree of an ordinary inflection, $+1$ (first figure) and $-1$ (second figure); 
	inward vertex (third figure)  and outward vertex (last figure). The curve in red is the evolute of the curve in black.}
	\label{fig:infIndVert}
\end{center}
\end{figure}

\begin{acknow}
The work in this paper was  supported by the FAPESP Thematic project grant 2019/07316-0.
The second author was supported by the FAPESP postdoctoral grant 2022/06325-8.
\end{acknow}


\noindent 
JWB: Department of Mathematical Sciences, University of Liverpool, Liverpool, L69 3BXl\\
E-mail: billbrucesingular@gmail.com\\

\noindent
MACF: Departamento de Matem\'atica, Universidade Federal de Vi\c{c}osa, Av. P. H. Rolffs, s.n., Campus Universit\'ario, CEP: 36570-000, Vi\c{c}osa - MG, Brazil
\\
E-mail: marco.a.fernandes@ufv.br\\

\noindent 
FT: Instituto de Ci\^encias Matem\'aticas e de Computa\c{c}\~ao - USP, Avenida Trabalhador s\~ao-carlense, 400, Centro, CEP: 13566-590, S\~ao Carlos - SP, Brazil.
\\
E-mail: faridtari@icmc.usp.br

\begin{thebibliography}{99}

\bibitem{Arnold}
V. I. Arnold, S. M.  Gusein-Zade and A. N. Varchenko,
{\it Singularities of differentiable maps}. Vol. I. Monographs in Mathematics 82, 1985.	

\bibitem{BruceGaffney}
J. W.  Bruce and T. J.  Gaffney,  Simple singularities of mappings  $\C,0\to \C^2,0$.  
{\it J. London Math. Soc.} 26 (1982), 465--474.

\bibitem{BruceGiblin}
J. W. Bruce and P. J. Giblin, {\it Curves and Singularities}. Cambridge University Press, 1992. 

\bibitem{Ploski}
P. Cassou-Noguès and A. Ploski, Invariants of plane curve singularities and Newton diagram. {\it Univ. Iagellonicae Acta Math.} 49 (2011), 9--34. 

\bibitem{DiasTari}
F. S. Dias and F. Tari, On vertices and inflections of plane curves. {\it J. Singul.} 17 (2018), 70--80. 

\bibitem{DiattaGiblin}
A. Diatta and P. Giblin, Vertices and inflexions of plane sections of surfaces in $\R^3$. {\it Real and Complex Singularities}, 71--97, Trends Math., Birkhäuser Basel, 2007.

\bibitem{Eisenbud}
D. Eisenbud and H. I. Levine, 
An algebraic formula for the degree of a $C^{\infty}$ map germ.
{\it Ann. of Math.} 106 (1977), 19--44.

\bibitem{MarcoSamuel}
M. A. C. Fernandes and S. dos Santos, Geometric deformations of singularities of plane curves. In preparation

\bibitem{Garay}
D. M. Garay, On vanishing inflection points of plane curves. {\it Ann. Inst. Fourier} ({\it Grenoble}) 52 (2002), 849--880.

\bibitem{Hefez}
A. Hefez, Irreducible plane curve singularities. \textit{Real and complex singularities}, 1--120, CRC Press, 2003.

\bibitem{milnor} 
J. W. Milnor, {\it Singular points of complex hypersurfaces}. Annals of Math. Studies, Princeton University Press, 1968.

\bibitem{Porteous} I. R. Porteous, Probing singularities. In Singularities, {\it Proceedings of Symposia in Pure Mathematics}, AMS, vol. 20 - Part 2 (1983), 395--406.
	
\bibitem{SalariTari}
M. Salarinoghabi and F. Tari, Flat and round singularity theory of plane curves. {\it Quart. J. Math.} 68 (2017), 1289--1312.

\bibitem{InfVertProfiles}
F. Tari,   M. Salarinoghabi, M. Hasegawa, 
{\it Geometric Deformations of Discriminants and Apparent Contours.} Volume 2370, Lecture Notes in Mathematics, Springer Nature, 2025.

\bibitem {wall}
C. T. C. Wall, Finite determinacy of smooth map-germs. {\it Bull. London. Math. Soc.}  13 (1981), 481--539. 

\bibitem{WallDuality} C. T. C. Wall, Duality of singular plane curves. {\it J. London Math. Soc.}  50 (1994), 265--275.

\bibitem{wall2}
C. T. C. Wall, {\it Singular points of plane curves}. London Math. Soc. Student Texts, 63, Cambridge University Press, 2004. 

\bibitem{wallflat}
C. T. C. Wall,  Flat singularity theory. {\it J. London Math. Soc.}   87 (2013), 622--640.

\end{thebibliography}
\end{document}